\documentclass[12pt,a4paper]{article}
\usepackage[T2A]{fontenc}

\usepackage[utf8]{inputenc}
\usepackage[english]{babel}

\usepackage{amsfonts,latexsym,amssymb,amsthm,amsmath,euscript,cite} 
\usepackage[a4paper,left=2cm,right=2cm,top=2cm,bottom=1.85cm]{geometry}

\usepackage[OT4]{fontenc}

\usepackage{mathtools}
\usepackage[T1]{fontenc}
\usepackage[utf8]{inputenc}
\usepackage{hyperref}
\usepackage{url}
\usepackage[T1]{fontenc}
\usepackage{lmodern}
\usepackage{amsfonts}
\usepackage{graphicx}
\usepackage{color}

\linespread{1.3}

\DeclareTextCompositeCommand{\k}{LY1}{e}
{\oalign{e\crcr\noalign{\kern-.27ex}\hidewidth\char7\hidewidth}}

\numberwithin{equation}{section}
\theoremstyle{plain}
\newtheorem{theorem}{Theorem}[section]
\newtheorem{lemma}{Lemma}[section]

\newtheorem{remark}{Remark}[section]
\newtheorem{definition}{Definition}[section]

\newtheorem*{corollary*}{Corollary}

\def\col{\mathrm{col}\,}

\def\tr {\mathrm{tr}\,}
\sloppy

\tolerance=4000

\date{}
\begin{document}

\title{The Cauchy problem for the (2+1) integrable nonlinear Schrödinger equation}

\author{ L.~P.~Nizhnik\footnote{Institute of Mathematics NAS of Ukraine, Kyiv, Ukraine, nizhnik@imath.kiev.ua }}
\maketitle

\begin{abstract}
We study the Cauchy problem for the (2+1) integrable nonlinear Schrödinger equation by the inverse scattering transform (IST) method.  This Cauchy problem with given initial data and boundary data at infinity is reduced by IST to the Cauchy problem for the linear Schrödinger equation, in which the potential is expressed in terms of boundary data.  The results on direct and inverse scattering problems for a two-dimensional Dirac system with special potentials are used and refined.  The Cauchy problem admits an explicit solution if the IST of the solution is an integral operator of rank 1. We give  one such solution.
\end{abstract}
MSC 2020:  35Q55, 35R30
\section{\bf{Introduction}}
We consider the (2+1) nonlinear evolutionary Schrödinger equation of the form
\begin{equation}\label{1.1}
   i\frac{\partial u}{\partial t}+\frac{\partial^2 u}{\partial x^2}+\frac{\partial^2 u}{\partial y^2}+(v_1 +v_2)u=0.
\end{equation}
The pseudopotentials $v_1$ and $v_2$ are real functions of $(x,y;t)$ variables and are related to the solution $u(x,y; t)$ of the equation (\ref{1.1}) by the stationary equations
\begin{equation}\label{1.2}
   \frac{\partial v_1}{\partial x}=2\frac{\partial}{\partial y}|u|^2,\qquad \frac{\partial v_2}{\partial y}=2\frac{ \partial}{\partial x}|u|^2.
\end{equation}
Systems of equations close to the reduced one (\ref{1.1})-(\ref{1.2}) are found in applications, and in the theory of surface waves they are called the Davey-Stewartson equations \cite{3,4,5}. The IST-integrable Davey-Stewartson equation is not symmetric with respect to $x$ and $y$, unlike the system (\ref{1.1})--(\ref{1.2}). We will not discuss the applied needs of the system (\ref{1.1})--(\ref{1.2}) here, but consider in detail how to mathematically correctly set the Cauchy problem for such a system and justify the possibility of studying it using the IST method.
The construction of IST for the system (\ref{1.1})-(\ref{1.2}) starts from the Lax representation of such a system in the form of an operator identity
\begin{equation}\label{1.3}
   LP-QL=0,
\end{equation}
where $L,\,P,\,Q$ are differential operators whose coefficients are explicitly expressed in terms of the functions $u,$\,$v_1,$\,$v_2$. Such operators can be
\begin{equation}\label{1.4}
  L= \begin{pmatrix}\frac{\partial}{\partial x}& u\\-\bar{u} & \frac{\partial}{\partial y}\end{pmatrix},\,\, P= \begin{pmatrix} \mathfrak{D}+v_1 & -2u_x\\ -2\bar{u}_y & \mathfrak{D}-v_2\end{pmatrix},\,\,
  Q= \begin{pmatrix} \mathfrak{D}+v_1 & -2u_y\\ -2\bar{u}_x & \mathfrak{D}-v_2\end{pmatrix},
\end{equation}
where
$$ \mathfrak{D}=i\frac{\partial}{\partial t}-\frac{\partial^2}{\partial x^2}+\frac{\partial^2}{\partial y^ 2}.
$$
Substituting (\ref{1.4}) into (\ref{1.3}) results in the system (\ref{1.1})-(\ref{1.2}). Of course, this requires the local existence of all arising derivatives of the functions $u,$\,$v_1,$\,$v_2.$ the scattering problem for which is well studied \cite{1,2} under the assumption $u\in L_2(R^2)$, then the solution of the Cauchy problem for the system (\ref{1.1})--(\ref{1.2}) in this work will be studied for the case of belonging
$u(x,y;t) \in L_2(R^2)$.
Of course, the Cauchy problem for the system (\ref{1.1})--(\ref{1.2}) requires initial conditions $u(x,y;t)|_{t=0}=u_0(x,y) $. It is necessary to specify the functional space for the solution and the initial data. However, the presence of stationary equations relating the pseudopotentials $v_1$ and $v_2$ with the solution $u$ requires a different functional space for pseudopotentials and additional boundary data at infinity. If there are no additional data to the initial data, then there can be many solutions. Thus, if $u(x,y;t),$\,$v_1(x,y;t),$\,$v_2(x,y;t)$ the solution of the system (\ref{1.1})-( \ref{1.2}), then $\hat{u}(x,y;t)=e^{itk}u(x,y;t)$ \,$\hat{v}_1=v_1+k, $\,$\hat{v}_2=v_2$ for any real value of $k$ is also a solution to the system (\ref{1.1})-(\ref{1.2}) with the same initial data $\hat{u} (x,y;t)|_{t=0}=u(x,y;0).$ We also assume that the pseudopotentials $v_1(x,y;t),$\,$v_2(x,y ;t)$ have limit values
   $$v_1(\pm \infty,y;t)=p_{\pm}(y,t),\qquad v_2(x,\pm \infty;t)=q_{\pm}(x,t).
   $$
   In other words, we have
   \begin{equation}\label{1.5}
   v_1(x,y;t)=p_-(y,t)+2 \int_{-\infty}^{x}\,\frac{\partial}{\partial y}|u(s,y;t |^2\,ds,\quad v_2(x,y;t)=q_+(x,t)-2\int_{y}^{+\infty}\,\frac{\partial}{\partial x }|u(x,s;t|^2\,ds.
\end{equation}
Instead of these equalities, we can assume that they hold similar
\begin{equation}\label{1.6}
    v_1(x,y;t)=p_+(y,t)-2 \int_{x}^{+\infty}\,\frac{\partial}{\partial y}|u(s,y;t |^2\,ds,\quad v_2(x,y;t)=q_-(x,t)+2\int_{-\infty}^{y}\,\frac{\partial}{\partial x }|u(x,s;t|^2\,ds,
\end{equation}
if we take other scattering data in IST.
Additional conditions on $p_{\pm},$\,$q_{\pm}$ will be given in the corresponding theorems. Now we give the definition of the Cauchy problem for the system (\ref{1.1})-(\ref{1.2}).
\begin{definition}
The Cauchy problem for the system (\ref{1.1})-(\ref{1.2}) is to find solutions to these equations that satisfy the given initial conditions
$u(x,y;0)=u_0(x,y)\in L_2(R^2)$ and equations (\ref{1.5}) for given $p_{-}(y,t)$ and $q_ {+}(x,t)$ , or
equation (\ref{1.6}) for given $p_{+}(y,t)$ and $q_{-}(x,t)$.
\end{definition}
\begin{remark}
Note that in \cite{6} for the DSII equation, the homogeneous conditions at infinity are replaced by the requirement that the pseudopotential belongs to the space $L_2(R^2)$ with an additional narrowing of the space of solutions of the nonlinear equation. This is equivalent to the condition that the pseudopotential $w(|u|^2)$ depends only on $|u|^2$ and for $u\rightarrow 0$ in some norm, then $w(u_n)\rightarrow 0$ in another.  Roughly speaking, the condition $w(0)=0$ is equivalent to homogeneous boundary conditions. This  follows obviously from (\ref{1.5}) for the equation (\ref{1.1}).
\end{remark}
Note by $\mathfrak{A}$ IST, which takes the potential $u(x,y)\in L_2(R^2)$ in the Dirac operator
$$
\begin{pmatrix}\displaystyle \frac{\partial}{\partial x}& \displaystyle u\\\displaystyle -\bar{u} & \displaystyle \frac{\partial}{\partial y}\end{pmatrix}
$$
into the function $f(\xi,\eta)\in L_2(R^2)$ the scattering data
\begin{equation}\label{1.7}
   \mathfrak{A} u=f.
\end{equation}
The operator $\mathfrak{A}$ is noninear, and its action is constructive and requires the solution of linear Fredholm equations with the Hilbert-Schmidt kernels.
The operator $\mathfrak{A}$ has an inverse $\mathfrak{A}^{-1},$ in other words, if $\mathfrak{A}u_1=f$ and $\mathfrak{A}u_2=f$ then $u_1=u_2=\mathfrak{A}^{-1}f.$
The range of the operator $\mathfrak{A}$ is the set of all functions
$f(\xi,\eta)\in L_2(R^2)$ such that the integral operator $F$ with kernel $f(\xi,\eta)$ in $L_2(R^1)$ has a strictly norm less than 1. The constructive and explicit form of the operator $\mathfrak{A}^{-1}$ will be presented in section 2.
The main property of $\mathfrak{A}$ transform is that the operator $\mathfrak{A}$ maps the solution of the Cauchy problem (\ref{1.1})-(\ref{1.5}) $u(x,y;t)$  into the function $f(\xi,\eta;t)$, which is a solution of the linear evolutionary Schrödinger equation
\begin{equation}\label{1.8}
   i \frac{\partial \overline{ f}}{\partial t}+\frac{\partial^2 \overline{f}}{\partial \xi^2}+\frac{\partial^2 \overline{ f}}{\partial \eta^2}+(p_+(\eta,t)+q_-(\xi,t)) \overline{f}=0
\end{equation}
with the initial condition $f(\xi,\eta;0)=\mathfrak{A}u(x,y;0).$ Therefore, the possibility of using IST in studying the Cauchy problem for the system (\ref{1.1})-(\ref{1.5}) is reduced to whether the time evolution of $f(\xi,\eta;t)$ according to equation (\ref{1.8}) will leave it in the range of transform $\mathfrak{A}$? In other words, will the solution $f(\xi,\eta;t)$ of the Cauchy problem (\ref{1.8}) generate an integral operator $F(t)$ with the kernel $f(\xi,\eta;t)$, whose norm for $t\geq0$ is strictly less than 1? The answer to this question is positive, moreover, the conditions on the boundary data $p_{+}(y,t)$ and $q_{-}(x,t)$ when $||F(t)||=|| F(0)||<1.$ In the case, when in the Cauchy problem for the system
(\ref{1.1})-(\ref{1.5}) or (\ref{1.1})-(\ref{1.6}) the boundary conditions are homogeneous, i.e. $p_{+}\equiv 0$ and $q_{ -} \equiv 0$ or $p_{-}\equiv 0$ and $q_{+} \equiv 0$,
IST is an efficient way to study the Cauchy problem for the system(\ref{1.1})-(\ref{1.2}). In this case, the evolution equation (\ref{1.8}) is a linear differential equation with constant coefficients. Section 5 gives an example of an explicit solution of the Cauchy problem for the system (\ref{1.1})-(\ref{1.5}).

\section{Direct and inverse scattering problems for two-dimensional Dirac systems}
\subsection{The case of arbitrary potentials}
Let us present the well-known results \cite{1,2 } on the scattering problem for Dirac systems
\begin{equation}\label{2.1}
\begin{array}{c}
\displaystyle \frac{\partial \psi_1(x,y)}{\partial x}+ u_1(x,y)\psi_2(x,y)=0, \\
\displaystyle \frac{\partial \psi_2(x,y)}{\partial x}+ u_2(x,y)\psi_1(x,y)=0.
\end{array}
\end{equation}
The potentials in the system (\ref{2.1}) are the complex-valued functions $u_1,\,u_2 \in L_2(R^2)$. For the system (\ref{2.1}), there are admissible solutions with the following asymptotics
\begin{equation}\label{2.2}
\begin{array}{c}
\psi_1(x,y)=a_1(y)+o(1),\,\, x\rightarrow - \infty;\quad \psi_1(x,y)=b_1(y)+o(1),\quad x\rightarrow + \infty; \\
\psi_2(x,y)=a_2(x)+o(1),\,\, y\rightarrow - \infty;\quad \psi_2(x,y)=b_2(x)+o(1),\quad y\rightarrow + \infty,
\end{array}
\end{equation}
where all functions $a_k,$\,$b_k \in L_2(R^1),$ in this case, if one of the pairs of functions from $L_2(R^1)$ \,$(a_1,a_2),$\, $(b_1,b_2),$\,$(a_1,b_2),$\,$(b_1,a_2)$ are fixed, then all the others are uniquely determined through the chosen pair. This means that there are bounded linear operators $b=Sa$ and $a=S^{-1}a$, where $a=\col(a_1,a_2)$ and $b=\col(b_1,b_2)$ . The operator $S$ is called the scattering operator for the system (\ref{2.1}), $a$ are incident waves, $b$ are scattered waves. Moreover, the matrix $2\times2$ operators $S$ and $S^{-1}$ have the form $S=I+F$,\,$S^{-1}=I+G,$ where the matrix operators
$$F=\begin{Vmatrix}
         F_{11} & F_{12} \\
        F_{21} & F_{22}
       \end{Vmatrix}, \quad
       G=\begin{Vmatrix}
         G_{11} & G_{12} \\
        G_{21} & G_{22}
       \end{Vmatrix}$$
       consist of integral operators with the Hilbert-Schmidt kernels.
Wherein
$$(F_{kk}a)(\xi)=\int_{-\infty}^{\xi}\,F_{kk}(\xi,\eta)a(\eta)\,d\eta, \quad (G_{kk}b)(\eta)=\int_{\eta}^{+\infty}\,G_{kk}(\xi,\eta)b(\xi)\,d\xi,
$$
that is, the diagonal elements $F_{11}$ and $F_{22}$ are Volterra integral operators with a variable upper limit of integration, and $G_{kk}$,\,$k=1,2$ are Volterra integral operators with variable lower limit of integration. The operators $S=I+F$,\,$S^{-1}=I+G$ themselves allow two-sided factorization into Volterra factors $S=(I+K_+)(I+K_-)=(I+W_ -)(I+W_+).$ The scattering data for the system (\ref{2.1}) are pairs of integral operators $(F_{12},G_{21})$ or $(F_{21},G_{12} )$ or pairs of their kernels. The main result on the inverse scattering problem for the system (\ref{2.1}) is the unique effective connection of the scattering data with the pair of potentials $(u_1,u_2)$ in the system (\ref{2.1}) and the description of the scattering data \cite{1 ,2}.
\begin{remark}\label{2.1}
Note that due to the equalities $SS^{-1}=S^{-1}S=I$ and the representations $S=I+F$,\,$S^{-1}=I+G$ we have the identities
\begin{equation}\label{2.3}
\begin{array}{c}
  I-F_{12}G_{21}=(I+F_{11})(I+G_{11}), \\
    I-F_{21}G_{12}=(I+F_{22})(I+G_{22}).
\end{array}
\end{equation}
They show that the operators constructed from the scatter data allow either left or right factorization. This property is decisive for a pair of integral operators to be scattering data for the Dirac system with potentials $u_1,\,u_2 \in L_2(R^2)$.
\end{remark}
\begin{remark}\label{2.2}
Let us present here more criteria for the factorizability of the Fredholm operator $I+K$, where $K$ is an integral operator with the Hilbert-Schmidt kernel. For the operator $I+K$ to admit right factorization into Volterra factors, it is necessary and sufficient that for any real $-\infty<\lambda \le \infty$ there exists an operator $(I+Q_{\lambda}KQ_{ \lambda})^{-1}$. There will be a left factorization if and only if there exists an operator $(I+P_{\lambda}KP_{\lambda})^{-1}$. Here $Q_{\lambda}$ and $P_{\lambda}$ are projectors
$$Q_{\lambda}\varphi(x)=\theta(\lambda-x)\varphi(x),\\
P_{\lambda}\varphi(x)=\theta(x-\lambda)\varphi(x).
$$
\end{remark}
Let us present an algorithm for reconstructing the potentials $u_1$ and $u_2$ in the two-dimensional Dirac system (\ref{2.1}) from the known scattering data. Let the scattering data $(F_{2,1}, G_{1,2})$ for the system (\ref{2.1}) be known.
For convenience, we omit the indices and consider the pair as the Hilbert-Schmidt operator $(F,G)$ as scattering data. We will assume that the kernels of integral operators depend on the variables $\xi$ and $\eta$.
Consider two Hilbert--Schmidt operators depending on the parameters
$\lambda$ and $\mu:$
\begin{equation}\label{2.a}
   U_1=[I-GQ_{\lambda}FP_{\mu}]^{-1}\cdot G, \quad U_2=-[I-FP_{\mu}GQ_{\lambda}]^{-1}\cdot F,
\end{equation}
If $(F,G)$ are scattering data, then these operators exist and are the Hilbert--Schmidt integral operators. Their kernels
$$U_1(\xi,\eta)=U_1(\xi,\eta;\lambda,\mu),\quad U_2(\xi,\eta)=U_2(\xi,\eta;\lambda,\mu )
$$
depend on parameters
$\lambda$ and $\mu$. Thus the following equalities are valid
\begin{equation}\label{2.2a}
   u_1(x,y)=U_1(y,x;x,y), \quad u_2(x,y)=U_2(x,y;x,y).
\end{equation}

We can obtaine the formula (\ref{2.2a}) by solving a system of solvable linear integral equations \cite{1,2}. Therefore, the above algorithm is an efficient algorithm for solving the inverse scattering problem for the system (\ref{2.1}).
\subsection{Case of skew-symmetric potential}
Consider now an important case for the system (\ref{1.1})--(\ref{1.2}), when
$$u_1(x,y)=u(x,y),\qquad u_2(x,y)=-\overline{u}(x,y),$$ where $u \in L_2(R^2) $.
This is a skew-symmetric potential. We have
$$U=\begin{pmatrix}0, &u_1\\u_2,&0\end{pmatrix}= \begin{pmatrix}0,&u\\-\overline{u},&0\end{pmatrix},\quad U ^*=-U.
$$
\begin{theorem}\label{3}
In the case of a two-dimensional Dirac system (\ref{2.1}) with a skew-symmetric potential, i.e. with the condition $u_1=u$,\,$u_2=-\bar{u}$,\,$u \in L_2(R^2)$, the scattering operator $S$ is unitary: $S^{-1}= S^*$. In this case, $F_{21}=G_{12}^*$ and $G_{21}=F_{12}^*$. Therefore, only one of the operators $F_{21}$ or $G_{21}$ can serve as scattering data.
\end{theorem}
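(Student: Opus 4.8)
The plan is to derive unitarity of $S$ from a conservation law that is forced by the skew-symmetry $u_1=u$, $u_2=-\bar u$ (equivalently, from $U^{*}=-U$ noted above), and then to read off the stated relations between the blocks of $F$ and $G$ by pure algebra.

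First I would establish the pointwise identity for admissible solutions. If $\psi=\col(\psi_1,\psi_2)$ solves (\ref{2.1}) with this potential, then $\partial_x\psi_1=-u\psi_2$ and $\partial_y\psi_2=\bar u\psi_1$, so a one-line computation gives
\begin{equation*}
\partial_x|\psi_1|^2+\partial_y|\psi_2|^2=\bigl(-u\psi_2\bar\psi_1-\bar u\psi_1\bar\psi_2\bigr)+\bigl(\bar u\psi_1\bar\psi_2+u\psi_2\bar\psi_1\bigr)=0 ,
\end{equation*}
and, for two admissible solutions $\psi,\varphi$, likewise $\partial_x(\psi_1\bar\varphi_1)+\partial_y(\psi_2\bar\varphi_2)=0$. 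Integrating the first identity over the square $[-R,R]^2$ and applying the fundamental theorem of calculus reduces it to a boundary flux,
\begin{equation*}
\int_{-R}^{R}\bigl(|\psi_1(R,y)|^2-|\psi_1(-R,y)|^2\bigr)\,dy+\int_{-R}^{R}\bigl(|\psi_2(x,R)|^2-|\psi_2(x,-R)|^2\bigr)\,dx=0 .
\end{equation*}
Letting $R\to\infty$ and using the asymptotics (\ref{2.2}) (in the $L_2(\R^1)$ sense in which admissible solutions are defined in \cite{1,2}), the four terms tend to $\|b_1\|^2$, $\|a_1\|^2$, $\|b_2\|^2$, $\|a_2\|^2$, whence $\|b_1\|^2+\|b_2\|^2=\|a_1\|^2+\|a_2\|^2$. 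In other words, with $a=\col(a_1,a_2)$, $b=\col(b_1,b_2)\in L_2(\R^1)\oplus L_2(\R^1)$ and $b=Sa$, the operator $S$ is a linear isometry; equivalently (by polarization, or directly from the two-solution identity) $S^{*}S=I$. Since $S=I+F$ is already known to be bounded and boundedly invertible, this forces $S^{-1}=S^{*}$, i.e. $S$ is unitary.

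It then remains to extract the block relations. From $I+G=S^{-1}=S^{*}=(I+F)^{*}=I+F^{*}$ we get $G=F^{*}$, and comparing the $2\times2$ block structures gives $G_{11}=F_{11}^{*}$, $G_{22}=F_{22}^{*}$, $G_{12}=F_{21}^{*}$, $G_{21}=F_{12}^{*}$; the last two are precisely $F_{21}=G_{12}^{*}$ and $G_{21}=F_{12}^{*}$. Hence in each admissible scattering pair, $(F_{21},G_{12})$ or $(F_{12},G_{21})$, the second member is the adjoint of the first, so a single operator — $F_{21}$ or $G_{21}$ — already carries all the scattering information.

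I expect the only non-routine step to be the passage to the limit $R\to\infty$ in the flux integral: one needs the convergences in (\ref{2.2}) to hold in $L_2(\R^1_y)$ (resp. $L_2(\R^1_x)$), with enough uniformity in the large variable to pass the limit under the integral sign. This is part of the established direct-scattering framework for (\ref{2.1}) in \cite{1,2}, and I would invoke it rather than reprove it; everything else is elementary algebra.
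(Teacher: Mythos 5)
Your proposal is correct and follows essentially the same route as the paper: the conservation identity $\partial_x(\psi_1\bar\varphi_1)+\partial_y(\psi_2\bar\varphi_2)=0$ forced by the skew-symmetry, integrated over the plane and combined with the asymptotics (\ref{2.2}), yields $(Sa,S\hat a)=(a,\hat a)$ and hence unitarity of $S$. You merely spell out two steps the paper leaves implicit --- the truncation to $[-R,R]^2$ with the limiting argument, and the block-by-block comparison of $G=F^{*}$ giving $F_{21}=G_{12}^{*}$, $G_{21}=F_{12}^{*}$ --- which is fine.
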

\begin{proof}
Let $\psi(x,y)$ and $\varphi(x,y)$ be two admissible solutions for the Dirac system with skew-symmetric potentials $u_1=u$,\,$u_2=-\bar{u}$. Let for the solution $\psi(x,y)$ note the incident waves by $a=\col(a_1,a_2),$ and the scattered ones by $b=\col(b_1,b_2)$. For the solution $\varphi(x,y)$, we note similar waves by $\hat{a}$ and $\hat{b}$. Since the functions $\psi(x,y)$ and $\varphi(x,y)$ satisfy the equations $L\psi=0$ and $L\varphi=0$, it is easy to see that
\begin{equation}\label{2.4}
   \frac{\partial }{\partial x}[\psi_1\cdot\bar{\varphi}_1]+ \frac{\partial }{\partial y}[\psi_2\cdot\bar{\varphi}_2]= 0.
\end{equation}
Integrating the identity (\ref{2.4}) over $x$ and $y$ from $-\infty$ to $+\infty$, we get
\begin{equation}\label{2.5}
   (b_1,\hat{b}_1)_{L_2}- (a_1,\hat{a}_1)_{L_2}+ (b_2,\hat{b}_2)_{L_2}- (a_2,\hat {a}_2)_{L_2}=0.
\end{equation}
In other words, the inner products in $L_2(R^2,E^2)$ coincide: $(b,\hat{b})= (a,\hat{a}).$ Since $b=Sa$ and $ \hat{b}=Sa$, then $(Sa,S\hat{a})=(a,\hat{a})$ and therefore $SS^{*}=S^{*}S=I$ , that is, the operator $S$ is unitary.
\end{proof}

\begin{theorem}\label{4}
In the case of the
skew-symmetric potential $u_1=u$,\,$u_2=-\bar{u}$,\,$u \in L_2(R^2)$ in
 the Dirac system (\ref{2.1}), the necessary and sufficient conditions for scattering data operators are the condition $||F_{21}||<1$ or $||G_{21}||<1$.
\end{theorem}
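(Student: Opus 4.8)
The plan is to prove the two implications separately, relying on the unitarity of the scattering operator $S$ from Theorem \ref{3}, the factorization identities (\ref{2.3}), the description of admissible scattering data from \cite{1,2} (recalled in operator form in Remark \ref{2.2} and in the reconstruction formulas (\ref{2.a})), and the elementary fact that $I+V$ is boundedly invertible whenever $V$ is a Volterra operator with Hilbert--Schmidt kernel.

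For \emph{necessity} I would start from the assumption that $F_{21}$ is the scattering data operator of some skew-symmetric Dirac system (\ref{2.1}). By Theorem \ref{3} the operator $S=I+F$ is unitary, so $S^{-1}=S^{*}$ forces $G_{22}=F_{22}^{*}$ (compare the $(2,2)$-blocks of $S^{-1}=I+G$ and $S^{*}=I+F^{*}$), and the second identity in (\ref{2.3}) then reads $I-F_{21}F_{21}^{*}=(I+F_{22})(I+F_{22})^{*}$. Since $F_{22}$ is a Volterra operator with Hilbert--Schmidt kernel, $I+F_{22}$ is boundedly invertible, so the right-hand side is bounded below by $\varepsilon I$ for some $\varepsilon>0$; hence $F_{21}F_{21}^{*}\le(1-\varepsilon)I$ and $\|F_{21}\|^{2}=\|F_{21}F_{21}^{*}\|\le 1-\varepsilon<1$. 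Using the first identity in (\ref{2.3}) together with $F_{12}=G_{21}^{*}$ and $G_{11}=F_{11}^{*}$ gives $I-G_{21}^{*}G_{21}=(I+F_{11})(I+F_{11})^{*}$, and the same estimate yields $\|G_{21}\|<1$.

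For \emph{sufficiency} I would take an arbitrary integral operator $F_{21}$ with Hilbert--Schmidt kernel and $\|F_{21}\|<1$. In the skew-symmetric case the scattering data is the pair $(F_{21},G_{12})$ with $G_{12}=F_{21}^{*}$ (Theorem \ref{3}), so it suffices to check that $(F,G):=(F_{21},F_{21}^{*})$ is admissible in the sense of \cite{1,2}, i.e.\ (using Remark \ref{2.2}) that the operators $I-G\,Q_{\lambda}F P_{\mu}$ and $I-F P_{\mu}G\,Q_{\lambda}$ from (\ref{2.a}) are invertible for all real $\lambda,\mu$ and that $I-F_{21}G_{12}$, $I-F_{12}G_{21}$ admit the required one-sided Volterra factorizations. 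Each of these operators has the form $I+K$ with $K$ a product of $F_{21}$, $F_{21}^{*}$ and the norm-one projectors $P_{\bullet},Q_{\bullet}$, so $\|K\|\le\|F_{21}\|^{2}<1$ and likewise $\|Q_{\lambda}KQ_{\lambda}\|,\|P_{\lambda}KP_{\lambda}\|<1$; thus $I+K$ is invertible by the Neumann series and, by the criterion of Remark \ref{2.2}, admits both one-sided factorizations. Hence $(F_{21},F_{21}^{*})$ lies in the range of the scattering transform and is the scattering data of a Dirac system (\ref{2.1}); finally one checks that the reconstructed potential $(u_{1},u_{2})$ is skew-symmetric, $u_{2}=-\bar u_{1}$, because the substitution $(u_{1},u_{2})\mapsto(-\bar u_{2},-\bar u_{1})$ combined with the conjugation of solutions used in the proof of Theorem \ref{3} sends the scattering data $(F_{21},G_{12})$ to $(G_{12}^{*},F_{21}^{*})=(F_{21},F_{21}^{*})$, so by uniqueness of the inverse problem the two potentials coincide. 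The case of $G_{21}$ is symmetric.

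I expect the main obstacle to be the sufficiency half: although the norm estimate is a one-line consequence of submultiplicativity, one must verify carefully that $\|F_{21}\|<1$ is precisely the hypothesis that makes \emph{every} operator occurring in the reconstruction of \cite{1,2} invertible and factorable, and that skew-symmetry is preserved under the reconstruction — which is why the symmetry and uniqueness from Theorem \ref{3} must be used once more.
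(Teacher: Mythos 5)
Your proposal is correct, and it diverges from the paper mainly in the necessity half. The paper proves necessity by applying the factorization criterion of Remark 2.2 to the operator built from the scattering data (after the unitarity reduction $F_{12}=G_{21}^{*}$, so effectively to $I-G_{21}G_{21}^{*}$): the norm $\|Q_{\lambda}G_{21}G_{21}^{*}Q_{\lambda}\|$ depends continuously and monotonically on $\lambda$, so if $\|G_{21}\|\ge 1$ it would equal $1$ at some $\lambda_{0}$; since for a compact positive self-adjoint operator the norm is the largest eigenvalue, $I-Q_{\lambda_{0}}G_{21}G_{21}^{*}Q_{\lambda_{0}}$ would then fail to be invertible, contradicting the factorization criterion. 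You instead read necessity directly off the identities (2.3) combined with unitarity, $I-F_{21}F_{21}^{*}=(I+F_{22})(I+F_{22})^{*}$, and use bounded invertibility of $I+F_{22}$ (a Volterra operator with Hilbert--Schmidt kernel is quasinilpotent) to get the quantitative bound $I-F_{21}F_{21}^{*}\ge \varepsilon I$, hence $\|F_{21}\|^{2}\le 1-\varepsilon<1$; this is shorter and avoids the continuity-of-norm argument, at the price of invoking the quasinilpotency of Hilbert--Schmidt Volterra operators, a standard fact the paper never states explicitly but which is consistent with its framework. On sufficiency you and the paper use the same mechanism: $\|F_{21}\|<1$ makes every truncation $I+Q_{\lambda}KQ_{\lambda}$, $I+P_{\lambda}KP_{\lambda}$ invertible by a Neumann series, so the criterion of Remark 2.2 applies and the pair lies in the admissible class described in \cite{1,2}; your extra step verifying that the reconstructed potential is indeed skew-symmetric, via the symmetry $(u_{1},u_{2})\mapsto(-\bar{u}_{2},-\bar{u}_{1})$ of the scattering map and uniqueness of the inverse problem, addresses a point the paper passes over in silence (``This condition is sufficient'') and makes the sufficiency half more complete, while both arguments rely on the cited characterization of scattering data from \cite{1,2} to the same extent.
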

\begin{proof}
Let us apply the criterion from Remark \ref{2.2} and Theorem \ref{2.1}. Since the conditions of  theorem  \ref{2.2} hold,  for the operator $G_{21}$ to be scattering data it is necessary and sufficient that for any real $-\infty \le \lambda \le +\infty$ there exists an operator $( I-Q_{\lambda}G_{21}G_{21}^{*}Q_{\lambda})^{-1}.$  The operator $Q_{\lambda}G_{21}G_{21}^{* }Q_{\lambda}$ is a self-adjoint integral operator with a Hilbert-Schmidt kernel, and its norm $||Q_{\lambda}G_{21}G_{21}^{*}Q_{\lambda}||$ is continuous depends on $\lambda$; moreover, it decreases monotonically as $\lambda \rightarrow \infty$. If $||G_{21}G_{21}^{*}|| \ge 1,$ then for some $\lambda=\lambda_0$ the norm of the operator $||Q_{\lambda_0}G_{21}G_{21}^{*}Q_{\lambda_0}||=1$, but then the operator $(I-Q_{\lambda_0}G_{21}G_{21}^{*}Q_{\lambda_0})^{-1}$ does not exist, since the norm of a self-adjoint positive operator coincides with the largest eigenvalue. Therefore $||G_{21}G_{21}^{*}|| <1,$ which is equivalent to $||G_{21}|| <$1. This condition is sufficient. The left factorization and the case of scattering data $F_{21}$ can be considered similarly.
\end{proof}
This implies the following statement.
\begin{remark}\label{2.3}
The operator $\mathfrak{A}$, which takes the potential $u \in L_2(R^2)$ to the scattering data, the function $f(\xi,\eta)$, which is the kernel of the integral operator $F$ of the scattering data with $|| F|| <$1. This is the description of the range of the operator setting IST.
\end{remark}
\begin{remark}
The relation between the potential $u(x,y)$ in the Dirac system and the scattering data $f(\xi,\eta)$ is effectively described by a chain of integral equations. Let us present the operator form for the transformer $\mathfrak{A}^{-1}$. Let the function $f(\xi,\eta)$ satisfy the condition that the integral operator $F$ for which the function $f(\xi,\eta)$ is a kernel has norm less than one. Consider an operator function of two real numbers
$\lambda$ and $\mu:$
\begin{equation}\label{2.6}
   U_{\lambda,\mu}=(I-F^*Q_{\lambda}FP_{\mu} )^{-1}F^*.
\end{equation}
The operator $U_{\lambda,\mu}$ is integral with kernel $u(\xi,\eta;\lambda,\mu )$, then
\begin{equation}\label{2.7}
   u(x,y)=u(y,x;x,y)=\mathfrak{A}^{-1}f.
\end{equation}
\end{remark}
\section{Evolution of scattering data}
\subsection{Equations for scattering data}
Let $u(x,y;t)$ be the potential for the Dirac system $L$ in the Lax representation (\ref{1.3}). This means that the function $u(x,y;t)$ is a solution to the system(\ref{1.1})--(\ref{1.2}). Let us consider the question of how the scattering data for such a potential change with time. If $\psi$ the solution of the Dirac system
$L\psi=0$, then it follows from the Lax representation that the function $\varphi=P\psi$ is also a solution of the Dirac system $L\varphi=0$. Of course, this requires that the function $\psi$ be sufficiently smooth in its arguments. From the equality $\varphi=P\psi$ and the explicit form of the operator $P$ given in (\ref{1.3}), we have
\begin{equation}\label{3.1}
   \varphi_1=(\mathcal{D}+v_1)\psi_1-2u_x\psi_2,\quad \varphi_2=-2\overline{u}_y\psi_1+(\mathcal{D}-v_2)\psi_2,
\end{equation}
where $\displaystyle \mathcal{D}=i\frac{\partial}{\partial t}-\frac{\partial^2}{\partial x^2}+\frac{\partial^2}{\partial y^2}$. We are interested in the $F_{21}$ scattering data. They relate the first incident wave of solutions $\psi$ and $\varphi$ to the second scattered wave of solutions to the Dirac system $L\psi=0$ and $L\varphi=0$. Note the incident waves for the $\psi$ solution by $(a_1,a_2)$, and the scattered waves by $(b_1,b_2)$. For the $\varphi$ solution, these waves will be noted by $(\hat{a}_1,\hat{a}_2)$ and $(\hat{b}_1,\hat{b}_2)$. Let $a_1(y)$ be a smooth finite function and the second wave $a_2\equiv 0$.  We define similar waves for the solution $\varphi$ using the relation (\ref{3.1}):
\begin{equation}\label{3.2}
   \hat{a}_1(y,t)=\lim_{x\rightarrow -\infty}\varphi_1(x,y;t)=\lim_{x\rightarrow -\infty}[(\mathcal{D}+ v_1)\psi_1(x,y;t)-2u_x\psi_2(x,y;t)]=\Bigl(\frac{\partial^2}{\partial y^2}+p_-(y,t) \Bigr)a_1(y),
\end{equation}
where $\displaystyle p_-(y,t)=\lim_{x\rightarrow -\infty}v_1(x,y;t).$
We can show that $\hat{a}_2=a_2=0$.
\begin{equation}\label{3.3}
\begin{array}{r}
   \hat{b}_2(x,t)=\lim_{y \rightarrow +\infty}\varphi_2(x,y;t)=\lim_{y\rightarrow +\infty}[-2\overline{u} _y\psi_1(x,y;t)+(\mathcal{D}-v_2)\psi_2(x,y;t)]=\\
  = \Bigl(i\frac{\partial}{\partial t}-\frac{\partial^2}{\partial x^2}-q_+(x,y)\Bigr)b_2(x,t).
   \end{array}
\end{equation}
Here $\displaystyle q_+(x,t)=\lim_{y\rightarrow +\infty}v_2(x,y;t)$. Since $\hat{b}_2=F_{21}(t)\hat{a}_1$,\,$b_2=F_{21}(t)a_1$, then the formula
\begin{equation}\label{3.4}
\Bigl( i\frac{\partial}{\partial t}-\frac{\partial^2}{\partial x^2}-q_+\Bigr)F_{21}a_1=F_{21}\Bigl( \frac{\partial^2}{\partial y^2}+p_-\Bigr)a_1.
\end{equation}
 follows from (\ref{3.2}),(\ref{3.3}).
Since $a_1$ is an arbitrary finite function, it follows from (\ref{3.4}) that
\begin{equation}\label{3.5}
  \Bigl (i\frac{\partial}{\partial t}-\frac{\partial^2}{\partial x^2}-\frac{\partial^2}{\partial y^2}-p_- -q_+\Bigr)f(x,y;t)=0,
\end{equation}
where $f(x,y;t)$ is the kernel of the integral $F_{21}$ scattering data operator $F$. The equation (\ref{3.5}) is a linear evolution equation where the real functions $p_-$ and $q_+$ (boundary data) serve as potentials. The equation (\ref{3.5}) can be given the form (\ref{1.8}).

\subsection{Analysis of solutions of a linear evolution equation for scattering data}
We consider the following a one-dimensional model.
\begin{theorem}\label{3.1}
A solution of the Cauchy problem for a linear equation
$$\Bigl[i\frac{\partial}{\partial t}+\varepsilon \frac{\partial^2}{\partial x^2}+w(x,t)\Bigr] u(x,t)=0,
$$
where $w(x,t)$ is a real, continuous, bounded function of its arguments in the space $L_2(R^1)$, and $\varepsilon=\pm1$ with initial conditions $u(x,t)|_{t =0}=u_0(x)\in L_2$ exists and is unique, and the operator $U(t)$ mapping the initial condition $u_0(x)$ into the solution $u(x,t)$ with $t>0 $, is a unitary operator in the space $L_2$.
\end{theorem}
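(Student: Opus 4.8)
The plan is to treat the equation $i\partial_t u + \varepsilon \partial_x^2 u + w(x,t)u = 0$ as an abstract evolution equation $\frac{du}{dt} = -iA(t)u$ in the Hilbert space $H = L_2(\mathbb{R}^1)$, where $A(t) = -\varepsilon \partial_x^2 - w(x,t)$ is a self-adjoint operator with common domain $D = H^2(\mathbb{R}^1)$ independent of $t$. The self-adjointness of $A(t)$ is the first thing to establish: $-\varepsilon \partial_x^2$ is self-adjoint on $H^2$ by the Fourier transform, and $w(x,t)$ is a bounded real multiplication operator, hence bounded self-adjoint, so $A(t)$ is self-adjoint on $D$ by the Kato--Rellich theorem (a bounded symmetric perturbation of a self-adjoint operator). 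Continuity and boundedness of $w$ in $t$ give that $t \mapsto A(t)$ is, say, strongly continuous as a family of operators on $D$ with the graph norm, which is the regularity hypothesis needed for the abstract generation theorem.

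Next I would invoke the standard theory of evolution equations generated by a time-dependent family of self-adjoint (more generally, skew-adjoint $-iA(t)$) generators: since each $-iA(t)$ generates a unitary group $e^{-i s A(t)}$ (Stone's theorem), and the family is suitably continuous, there exists a unique strongly continuous two-parameter family of unitary propagators $U(t,s)$ with $U(s,s)=I$, $U(t,r)U(r,s)=U(t,s)$, and $\frac{\partial}{\partial t}U(t,s)u_0 = -iA(t)U(t,s)u_0$ for $u_0 \in D$. Setting $U(t) := U(t,0)$ gives the operator in the statement; uniqueness of the solution follows from the usual energy/Duhamel argument, and the crucial point --- that $U(t)$ is unitary on $L_2$ --- follows because $\frac{d}{dt}\|U(t)u_0\|^2 = 2\,\mathrm{Re}\,(\dot U u_0, U u_0) = 2\,\mathrm{Re}\,(-iA(t)Uu_0, Uu_0) = 0$ since $A(t)$ is self-adjoint and $(iA\varphi,\varphi)$ is purely imaginary, so norms are preserved, and the same computation for the adjoint (backward) propagator gives surjectivity.

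A more self-contained route, avoiding citing the general Kato existence theorem, is to construct $U(t)$ by time-discretization: partition $[0,t]$, freeze the coefficient on each subinterval, compose the genuinely unitary groups $e^{-i(t_{k+1}-t_k)A(t_k)}$, and pass to the limit; the product of unitaries is unitary, uniform boundedness makes the limit exist strongly, and the limit solves the equation by a telescoping estimate. Either way, the conclusion for both signs $\varepsilon = \pm 1$ is identical since self-adjointness of $-\varepsilon\partial_x^2$ does not depend on the sign.

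The main obstacle is the time-dependence of the potential: if $w$ were time-independent, the result is immediate from Stone's theorem applied to the single self-adjoint operator $-\varepsilon\partial_x^2 - w(x)$. With $w = w(x,t)$ one must either appeal to the Kato--Yosida theory of non-autonomous evolution equations (checking its hypotheses --- common domain, measurable/continuous dependence, stability of the family --- which here are all easy because the $t$-dependence sits entirely in a uniformly bounded self-adjoint perturbation) or carry out the discretization argument carefully enough to justify the limit and the differentiability of $U(t)u_0$ on the domain $D$. The unitarity itself, by contrast, is the soft part: it is forced by the formal skew-adjointness of the generator and requires only the a priori energy identity once existence of a sufficiently regular solution is in hand.
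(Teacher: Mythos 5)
Your proposal is correct, but it follows a different route from the paper. The paper handles the time-dependence of $w$ not by invoking the Kato theory of non-autonomous evolution equations with unbounded generators (your main route), but by the interaction-picture substitution $u=e^{-iAt}v$ with $A=\varepsilon\,\partial_x^2$ self-adjoint: the free unitary group absorbs the unbounded part, and the transformed equation $\partial_t v = A(t)v$ has a \emph{bounded} generator depending continuously on $t$ (essentially $e^{iAt}$ conjugating multiplication by $iw(\cdot,t)$), so existence and uniqueness follow from the elementary Volterra integral equation (Picard iteration), with no need for a common domain, stability conditions, or a propagator theorem. Unitarity is then, as in your argument, the soft part: it comes from the self-adjointness of $-\varepsilon\partial_x^2 - w$, i.e.\ from $w$ being real. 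What your approach buys is generality and a standard framework (it would survive, e.g., unbounded but Kato-class time-dependent perturbations, and your discretization variant gives a constructive product formula); what the paper's trick buys is economy — precisely the obstacle you single out, the $t$-dependence of the potential, is neutralized by one substitution, reducing everything to an ODE in a Banach space. Both your energy-identity argument for norm preservation (on the domain, then by density, plus the backward propagator for surjectivity) and your verification of the Kato hypotheses are sound, so there is no gap; just be aware that the self-contained discretization route you sketch as an alternative still requires a careful convergence argument, which the paper's bounded-generator reduction renders unnecessary.
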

\begin{proof}
The operator $\displaystyle A=\varepsilon\frac{\partial}{\partial x^2}$ is self-adjoint in the space $L_2$. The Cauchy problem is reduced by substituting $u=e^{-iAt}v$ to the equation $\displaystyle \frac{\partial v}{\partial t} =A(t)v$, where the operator $A(t)$ is bounded and depends continuously on $t$. For this equation, the solvability of the Cauchy problem follows from the corresponding integral equation. The unitary property of the operator $U(t)$ follows from the self-adjointness of the operator $A+w$ due to the fact that  the function $w(x,t)$ is real.
\end{proof}
\begin{theorem}\label{3.2}
 The Cauchy problem for a linear equation
\begin{equation}\label{3.6}
  \Bigl [i\frac{\partial}{\partial t}+\varepsilon_1 \frac{\partial^2}{\partial x^2}+\varepsilon_2\frac{\partial^2}{\partial y^2 }+p(y,t)+q(x,t)\Bigr]u(x,y;t)=0,
\end{equation}
where $\varepsilon_1=\pm 1,$\, $\varepsilon_2=\pm 1,$ the functions $p$ and $q$ are real, continuous, and bounded in their arguments, consists in finding a solution to this equation in the space $L_2( R^2)$ satisfying the initial conditions $u|_{t=0}=u_0(x,y)\in L_2(R^2)$. The solution of the Cauchy problem for the equation (\ref{3.6}) can be represented as following:
\begin{equation}\label{3.7}
   U(x,y;t)=U_1(t)U_2(t)U_0(x,y),
\end{equation}
where the semigroup unitary operators $U_1(t)$ act on the variable $x$ and the unitary operators $U_2(t)$ act on the variable $y$. These operators are constructed according to the theorem \ref{3.1} from the equations
$$
\begin{array}{c}
\displaystyle \Bigl [i\frac{\partial}{\partial t}+\varepsilon_1 \frac{\partial^2}{\partial x^2}+q(x,t)\Bigr]u(x,y ;t)=0,\\[2mm]
\displaystyle \Bigl[i\frac{\partial}{\partial t}+\varepsilon_2\frac{\partial^2}{\partial y^2}+p(y,t)\Bigr]u(x,y ;t)=0.
\end{array}
$$
If the equality (\ref{3.7}) contains the functions $U_0(x,y)$ and $U(x,y;t)$ as kernels of the integral operators $A_0$ and $A(t)$, then this equality admits representation
\begin{equation}\label{3.8}
   A(t)=U_1(t)A_0 JU_2^*(t) J,
\end{equation}
where $ J$ is an operator in the space $L_2$ that assigns to each function $a\in L_2$ its conjugate $J a=\bar{a}.$ It follows from the formula (\ref{3.8}) that $|| A(t)||=||A_0||$, that is, this norm does not depend on $t$.
\end{theorem}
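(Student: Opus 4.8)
The plan is to exploit the fact that the potential $p(y,t)+q(x,t)$ in \eqref{3.6} separates additively between the $x$ and the $y$ variables, so that the propagator of \eqref{3.6} factors into the product of the two one--dimensional propagators furnished by Theorem~\ref{3.1}.

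First I would set $H_1(t)=\varepsilon_1\partial_x^2+q(x,t)$, acting in $L_2(\mathbb{R}_x)$, and $H_2(t)=\varepsilon_2\partial_y^2+p(y,t)$, acting in $L_2(\mathbb{R}_y)$. Since $q$ and $p$ are real, continuous and bounded, Theorem~\ref{3.1} applies to each of the Cauchy problems $i\partial_t v=-H_j(t)v$, $v|_{t=0}=v_0$, giving a unique solution described by a unitary evolution family $U_j(t)$ with $U_j(0)=I$. Because $H_1(t)$ differentiates in $x$ only and $H_2(t)$ in $y$ only, on a function $u_0(x,y)\in L_2(\mathbb{R}^2)$ the operators $U_1(t)$ and $U_2(t)$ act on different variables, hence commute, and each $H_k(t)$ commutes with the propagator of the other variable. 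Putting $u(x,y;t)=\bigl(U_1(t)U_2(t)u_0\bigr)(x,y)$ and differentiating, $i\partial_t u=i(\partial_tU_1)U_2u_0+iU_1(\partial_tU_2)u_0=-H_1u-H_2u=-\bigl(\varepsilon_1\partial_x^2+\varepsilon_2\partial_y^2+p+q\bigr)u$, so \eqref{3.6} holds with $u|_{t=0}=u_0$; this is \eqref{3.7}. To make the differentiation rigorous and to obtain uniqueness I would, as in the proof of Theorem~\ref{3.1}, substitute $u=e^{-iAt}v$ with $A=\varepsilon_1\partial_x^2+\varepsilon_2\partial_y^2$ self--adjoint, reducing \eqref{3.6} to $\partial_t v=iW(t)v$ with $W(t)=e^{iAt}\bigl(p(y,t)+q(x,t)\bigr)e^{-iAt}$ bounded and norm--continuous in $t$; the associated Volterra integral equation has a unique solution, defining a unitary propagator, which therefore coincides with $U_1(t)U_2(t)$.

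For the kernel identity, let $u_0(x,y)$ and $u(x,y;t)$ be the kernels of integral operators $A_0$ and $A(t)$ on $L_2(\mathbb{R}^1)$, say $(A_0\phi)(x)=\int u_0(x,y)\phi(y)\,dy$, and let $u_j(\cdot,\cdot;t)$ be the kernel of $U_j(t)$. Since $U_1(t)$ acts in the first argument and $U_2(t)$ in the second, $u(x,y;t)=\iint u_1(x,x';t)\,u_2(y,y';t)\,u_0(x',y')\,dx'\,dy'$, and Fubini's theorem gives $\bigl(A(t)\phi\bigr)(x)=\bigl(U_1(t)\,A_0\,U_2(t)^{\mathsf T}\phi\bigr)(x)$, where $U_2(t)^{\mathsf T}$ is the transposed operator (kernel $u_2(y',y;t)$). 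For any bounded operator $B$ one has $B^{\mathsf T}=JB^{*}J$, with $J$ the conjugation $J\phi=\bar\phi$, so this is exactly $A(t)=U_1(t)\,A_0\,J\,U_2^{*}(t)\,J$, i.e. \eqref{3.8}. Finally $U_1(t)$ is unitary and $JU_2^{*}(t)J$ is unitary (a composition of two conjugate--linear isometries and one linear isometry), so left and right multiplication by them preserves the operator norm; hence $\|A(t)\|=\|A_0\|$, and likewise $A(t)$ remains Hilbert--Schmidt when $A_0$ is.

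The analytic content --- existence, uniqueness and unitarity of the one--dimensional propagators, and the boundedness of the conjugated potential --- is already packaged in Theorem~\ref{3.1}, so the bulk of the argument is bookkeeping. The point needing the most care is the passage to kernels: one must know that $U_1(t)$ and $U_2(t)$ carry (at least distributional) integral kernels, so that the expression $\bigl(U_1(t)U_2(t)u_0\bigr)(x,y)$ and the subsequent Fubini step are legitimate. For Hilbert--Schmidt $A_0$, equivalently $u_0\in L_2(\mathbb{R}^2)$, I would avoid this by approximating $u_0$ with finite sums $\sum_k\alpha_k(x)\beta_k(y)$, for which $A(t)$ is manifestly $\sum_k\bigl(U_1(t)\alpha_k\bigr)\otimes\bigl(U_2(t)^{\mathsf T}\beta_k\bigr)$, and passing to the limit in Hilbert--Schmidt norm; this is routine, but it is the one place where the formal computation has to be justified.
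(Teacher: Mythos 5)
Your proposal is correct and follows essentially the same route as the paper: factor the evolution into the two one-dimensional unitary propagators of Theorem \ref{3.1}, then move the action of $U_2(t)$ on the second kernel variable to the right of $A_0$ as $JU_2^*(t)J$ (your transpose identity is exactly the paper's inner-product manipulation), with unitarity giving $\|A(t)\|=\|A_0\|$. The extra care you take with the Fubini/kernel step via tensor-product approximation is a refinement of, not a departure from, the paper's argument.
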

\begin{proof}
The formula (\ref{3.7}) follows directly from  theorem \ref{3.1}. The formula (\ref{3.8}) follows from the representation of  action of the integral operator $U_0$ with kernel $u_0(x,y)$ on an arbitrary function $\varphi \in L_2$ in the form
$$(U_0\varphi)(x)=\int\,u_0(x,y)\varphi(y)\,dy=<u_0(x,\cdot),\, J\varphi>_{L_2},
$$
where $J \varphi=\bar{\varphi}$. Therefore, the operator $U_2(t)$ acting on the second variable of the function $u_0(x,y)$ can be transferred in the scalar product to $J \varphi$. The kernel $u_2(t)u_0(x,y)=\hat{u}_2(x,y)$ generates an integral operator $\hat{U}_2$ represented as:
$$\hat{U}_2\varphi=<\hat{u}_2(x,\cdot), J\varphi>_{L_2}=<u_0(x,\cdot), U_2^* J\varphi> _{L_2}=\int\,u_0(x,y) J U_2^*(t)J\varphi(y)\,dy.
$$
In other words, $\hat{U}_2=A_0JU_2^*(t)J.$ Acting on the last equality by the operator $U_1(t)$, we get the equality (\ref{3.8}). Since the operators $U_1(t)$,\,$U_2(t)$,\,$J$ are unitary, (\ref{3.8}) implies the equality $||A(t)||=||A_0|| $.
\end{proof}
\begin{remark}
If the initial data $u_0(x,y)$ are the product of functions depending only on $x$ and only on $y$, then the solution $u(x,y;t)$ has the same property.
\end{remark}
\begin{remark}
If the scattering data $f(x,y;t)$ for $t=0$ define an integral operator with the norm $||F(0)||<1$, then for any $t\ge 0$ the norm $|| F(t)||= ||F(0)||<1$. This follows from the fact that according to (\ref{3.5}) the function $f(x,y;t)$ satisfies the conditions of the theorem \ref{3.2}. If the scattering data $F(0)$ is an integral operator of rank 1, then for any $t\ge 0$ the scattering data integral operator $F(t)$ has rank 1.
\end{remark}
\section{Results on the inverse scattering problem for the Dirac system in the case when the integral operator of scattering data has rank 1}
 The following simple facts from the theory of integral operators are required in order to obtain explicitly all elements of the scattering operator and the potential from the scattering data,
\subsection{Constructive facts from the theory of Hilbert--Schmidt integral equations}
An integral operator $K$ with the Hilbert--Schmidt kernel $k(x,y)\in L^2(R^2)$ has the trace $\displaystyle \tr K=\int\,k(x,x)\, dx.$ An operator $K$ has rank 1 if its kernel $k(x,y)$ can be represented as a product of two functions, each of them depends on only one argument $k(x,y)=f(x)\cdot g(y)$, where $f,g\in L^2(R^1)$. If an integral operator $K_+$ can be represented as $$K_+\psi(x)=\int_{-\infty}^{x}\,k(x,y)\psi(y)\,dy,
$$
then this is the Volterra operator of positive polarity. If an integral operator $K_-$ can be represented as
$$K_-\psi(x) =\int_{x}^{\infty}\,k(x,y)\psi(y)\,dy,
$$
then this is the Volterra operator of negative polarity. If the kernel of  the Volterra operator can be represented as $k(x,y)=f(x)\cdot g(y)$, then we say that the Volterra operator has the kernel of rank 1. Note that the product of two integral operators of rank 1 is an integral operator of rank 1. The product of an integral operator of rank 1 and a Volterra operator with a kernel of rank 1 is an integral operator of rank 1. The product of two Volterra operators with kernels of rank 1 will no longer be an operator of rank 1, and will not even have a kernel of rank 1. We need such simple results.
\begin{lemma}\label{4.1}
Let $K$ be an integral operator with the Hilbert--Schmidt kernel of rank 1. Then $(I-K)^{-1}$ exists if and only if the trace $\tr K \neq 1$. In this case,
\begin{equation}\label{4.1}
(I-K)^{-1}=I+\frac{1}{1-\tr K}K.
\end{equation}
\end{lemma}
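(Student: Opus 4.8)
The plan is to exploit the fact that a rank-one operator is, up to a scalar, idempotent. Write the Hilbert--Schmidt kernel as $k(x,y)=f(x)g(y)$ with $f,g\in L^2(\R^1)$, so that $K$ acts by $(K\psi)(x)=f(x)\int g(y)\psi(y)\,dy$. The first step is to record the algebraic identity $K^2=(\tr K)\,K$: applying $K$ twice gives $(K^2\psi)(x)=f(x)\bigl(\int g(y)f(y)\,dy\bigr)\bigl(\int g(z)\psi(z)\,dz\bigr)$, and $\int g(y)f(y)\,dy=\int k(y,y)\,dy=\tr K$. This identity drives the whole argument.

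Next, assuming $\tr K\neq1$, I would verify directly that $I+\frac{1}{1-\tr K}K$ is a two-sided inverse of $I-K$. Multiplying out $(I-K)\bigl(I+\frac{1}{1-\tr K}K\bigr)$ and using $K^2=(\tr K)K$ collapses the two $K$-terms to $\bigl(\frac{1}{1-\tr K}-1-\frac{\tr K}{1-\tr K}\bigr)K=0$, and the symmetric computation works on the other side; the coefficient $\frac{1}{1-\tr K}$ is in fact forced by the requirement that this term vanish. This simultaneously establishes the existence of $(I-K)^{-1}$ and formula (\ref{4.1}).

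For the converse I would assume $\tr K=1$ and exhibit a nontrivial element of the kernel of $I-K$. Since $\tr K=1\neq0$ the operator $K$ is nonzero, hence $f\not\equiv0$ in $L^2(\R^1)$; and $(I-K)f=f-f\int g f=f-(\tr K)f=0$. Thus $I-K$ is not injective and cannot be invertible, so $\tr K\neq1$ is also necessary.

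There is no genuine obstacle here; the only points deserving a word of care are that $\tr K=\int f g$ is well defined because $fg\in L^1(\R^1)$ when $f,g\in L^2(\R^1)$, and that the bilinear pairing $\int g\psi$ used to describe the action of $K$ is the one consistent with the trace formula $\tr K=\int k(x,x)\,dx$ (no complex conjugation enters).
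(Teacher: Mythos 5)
Your proof is correct and is exactly the ``simple verification of constructive assertions'' that the paper invokes without writing out: the identity $K^2=(\tr K)\,K$ makes the formula $(I-K)^{-1}=I+\frac{1}{1-\tr K}K$ a direct check, and taking $f$ as a null vector of $I-K$ when $\tr K=1$ settles necessity. Nothing further is needed.
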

\begin{lemma}\label{4.2}
Let $K$ be a Hilbert--Schmidt integral operator of rank 1 with kernel $k(x,y)=f(x)\cdot g(y)$, and norm $||K||=||f||_{ L_2}\cdot||g||_{L_2}<1.$ Then the operator $I-K$ admits two-sided factorization into the Volterra integral operators:
\begin{equation}\label{4.2}
   I-K=(I+A_+)^{-1}(I+A_-)^{-1}=(I+B_-)^{-1}(I+B_+)^{-1},
\end{equation}
where the integral operators $A_{\pm}$,\,$B_{\pm}$ have kernels
$$A_{+}(x,y)=\frac{f(x)}{1-\tr K Q_x}g(y)\theta(x-y),\quad
A_{-}(x,y)=f(x)\frac{g(y)}{1-\tr K Q_y}\theta(y-x),
$$
\begin{equation}\label{4.3}
   B_{+}(x,y)=f(x)\frac{g(y)}{1- \tr K P_y}\theta(x-y),\quad
   B_{-}(x,y)=\frac{f(x)}{1-\tr K P_x}g(y)\theta(y-x),
\end{equation}
where
$$\tr K Q_x=\int_{-\infty}^{x}\,f(s)g(s)\,ds,
$$
$$\tr K P_x=\int_{x}^{\infty}\,f(s)g(s)\,ds.
$$
\end{lemma}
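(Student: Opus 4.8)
The plan is to verify both factorizations directly at the level of kernels, identifying $(I-K)^{-1}$ by Lemma \ref{4.1}. Put $\tau=\tr K=\int_{-\infty}^{\infty}f(s)g(s)\,ds$; the Cauchy--Schwarz inequality gives $|\tau|\le\|f\|_{L_2}\|g\|_{L_2}<1$, so $\tr K\ne1$ and Lemma \ref{4.1} applies: $(I-K)^{-1}=I+\frac{1}{1-\tau}K$. Introduce $\phi(x)=\tr K Q_x=\int_{-\infty}^{x}f(s)g(s)\,ds$ and $\psi(x)=\tr K P_x=\int_{x}^{\infty}f(s)g(s)\,ds$, so that $\phi+\psi\equiv\tau$, $\phi'(x)=f(x)g(x)=-\psi'(x)$, and, by Cauchy--Schwarz applied on the appropriate half-line, $|\phi(x)|<1$ and $|\psi(x)|<1$ for every $x$. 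Hence $1-\phi$ and $1-\psi$ are bounded away from zero, the kernels $A_{\pm}$, $B_{\pm}$ written down in the statement are well defined, and estimating $|1-\phi(x)|^{-1}$ by a constant and integrating $|f(x)|^{2}$ against $\int|g(y)|^{2}\,dy$ shows that $A_{\pm}$ and $B_{\pm}$ are Hilbert--Schmidt integral operators; being Volterra, the operators $I+A_{\pm}$ and $I+B_{\pm}$ are invertible.

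For the first identity it suffices, after taking inverses, to prove $(I+A_-)(I+A_+)=(I-K)^{-1}=I+\frac{1}{1-\tau}K$, i.e.\ that $A_++A_-+A_-A_+$ has kernel $\frac{1}{1-\tau}f(x)g(y)$. The kernel of $A_+$ lives on $\{x>y\}$ and that of $A_-$ on $\{y>x\}$, so $A_++A_-$ has kernel $f(x)g(y)/\bigl(1-\phi(\max(x,y))\bigr)$. For the product, $(A_-A_+)(x,y)=\int A_-(x,z)A_+(z,y)\,dz=f(x)g(y)\int_{\max(x,y)}^{\infty}\frac{f(z)g(z)}{(1-\phi(z))^{2}}\,dz$, and since $\frac{d}{dz}\frac{1}{1-\phi(z)}=\frac{\phi'(z)}{(1-\phi(z))^{2}}=\frac{f(z)g(z)}{(1-\phi(z))^{2}}$ with $\phi(+\infty)=\tau$, this equals $f(x)g(y)\bigl(\frac{1}{1-\tau}-\frac{1}{1-\phi(\max(x,y))}\bigr)$. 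Adding the two contributions, the $\phi(\max(x,y))$ terms cancel and only $\frac{1}{1-\tau}f(x)g(y)$ survives. Inverting $(I+A_-)(I+A_+)=(I-K)^{-1}$ gives $I-K=(I+A_+)^{-1}(I+A_-)^{-1}$, a genuine two-sided Volterra factorization since $A_+$ has positive and $A_-$ negative polarity.

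The second identity is obtained in the same way with $\psi$ replacing $\phi$: now $B_+$ is supported on $\{x>y\}$ and $B_-$ on $\{y>x\}$, so $B_++B_-$ has kernel $f(x)g(y)/\bigl(1-\psi(\min(x,y))\bigr)$, while $(B_+B_-)(x,y)=\int B_+(x,z)B_-(z,y)\,dz=f(x)g(y)\int_{-\infty}^{\min(x,y)}\frac{f(z)g(z)}{(1-\psi(z))^{2}}\,dz=f(x)g(y)\bigl(\frac{1}{1-\tau}-\frac{1}{1-\psi(\min(x,y))}\bigr)$, using $f(z)g(z)=-\psi'(z)$, $\frac{d}{dz}\frac{1}{1-\psi(z)}=\frac{\psi'(z)}{(1-\psi(z))^{2}}$, and $\psi(-\infty)=\tau$. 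Again the sum telescopes to $\frac{1}{1-\tau}f(x)g(y)$, so $(I+B_+)(I+B_-)=(I-K)^{-1}$ and hence $I-K=(I+B_-)^{-1}(I+B_+)^{-1}$.

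The one point requiring care is precisely the role of the norm hypothesis: $\|f\|_{L_2}\|g\|_{L_2}<1$ is needed to guarantee $|\phi(x)|<1$ and $|\psi(x)|<1$ \emph{for all} $x$ (not merely $|\tau|<1$), so that the denominators $1-\tr K Q_x$ and $1-\tr K P_x$ never vanish and the kernels in the statement make sense. Beyond that, one must only keep track of whether $\max(x,y)$ or $\min(x,y)$ occurs as the limit of integration in the composition integral, so that the antiderivative of $\phi'/(1-\phi)^{2}$ (resp.\ $\psi'/(1-\psi)^{2}$) is evaluated between the right endpoints; everything else is a routine verification.
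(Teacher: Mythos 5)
Your proof is correct, and it is essentially the argument the paper intends: the paper dismisses all the lemmas of Section 4 with the remark that their proofs reduce to ``a simple verification of constructive assertions,'' and your kernel-level computation (identifying $(I-K)^{-1}=I+\frac{1}{1-\tr K}K$ via Lemma \ref{4.1} and checking that $A_++A_-+A_-A_+$ and $B_++B_-+B_+B_-$ both telescope to the kernel $\frac{1}{1-\tr K}f(x)g(y)$) is precisely that verification, carried out in full. Your observation that the hypothesis $\|f\|_{L_2}\|g\|_{L_2}<1$ is what keeps the denominators $1-\tr KQ_x$ and $1-\tr KP_x$ away from zero is a useful point the paper leaves implicit.
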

\begin{lemma}\label{4.3}
Let $K_+$ be the Hilbert--Schmidt Volterra integral operator of positive polarity with a kernel of rank 1 and $k(x,y)=f(x)\cdot g(y)$. Then there is an operator
\begin{equation}\label{4.4}
   (I-K_+)^{-1}=I+\hat{K}_+,
   \end{equation}
where $\hat{K}_+$ is the Volterra Hilbert--Schmidt integral operator of positive polarity with a kernel of rank 1, whose kernel $\hat{K}_+(x,y)$ has the form
\begin{equation}\label{4.5}
   \hat{K}_+(x,y)=\hat{f}(x)\hat{g}(y)\theta(x-y),
\end{equation}
where
\begin{equation}\label{4.6}
   \hat{f}(x)=f(x)e^{\tr K_+ Q_x},\quad \hat{g}(y)=g(y)e^{-\tr K_+ Q_y},
\end{equation}
where $Q_{\lambda}$ - projector
$$Q_{\lambda}\psi(x)=\theta(\lambda-x)\psi(x), \quad \tr( K_+ Q_x)=\int_{-\infty}^{x}\, f(s)g(s)\,ds.
$$
\end{lemma}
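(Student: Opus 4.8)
The plan is to obtain $\hat K_+$ explicitly by solving the resolvent equation $(I-K_+)\varphi=\psi$; note that Lemma~\ref{4.1} does not apply directly here, because the factor $\theta(x-y)$ stops $f(x)g(y)\theta(x-y)$ from being the kernel of a genuine rank-one operator. Writing the equation out, $\varphi(x)-f(x)\int_{-\infty}^{x}g(y)\varphi(y)\,dy=\psi(x)$, so I would introduce the scalar primitive $h(x)=\int_{-\infty}^{x}g(y)\varphi(y)\,dy$, which vanishes at $-\infty$; substituting $\varphi=\psi+fh$ and differentiating converts the integral equation into the linear first-order ODE $h'(x)-f(x)g(x)h(x)=g(x)\psi(x)$. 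Since $\frac{d}{dx}\tr K_+Q_x=f(x)g(x)$, the integrating factor is precisely $e^{-\tr K_+Q_x}$, and integrating from $-\infty$ gives $h(x)=e^{\tr K_+Q_x}\int_{-\infty}^{x}e^{-\tr K_+Q_y}g(y)\psi(y)\,dy$, whence $\varphi=\psi+fh=(I+\hat K_+)\psi$ with $\hat K_+$ the positive-polarity Volterra operator of kernel $f(x)e^{\tr K_+Q_x}e^{-\tr K_+Q_y}g(y)\theta(x-y)$, that is exactly (\ref{4.5})--(\ref{4.6}). (Alternatively the same formula emerges from the Neumann series $\sum_{n\ge0}K_+^n$ after resumming via the symmetrization of iterated integrals; I would keep this as a sanity check.)

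Next I would verify that $\hat K_+$ is genuinely Hilbert--Schmidt, i.e. that $\hat f,\hat g\in L_2(R^1)$. Since $f,g\in L_2(R^1)$, Cauchy--Schwarz gives $fg\in L_1(R^1)$, so $\tr K_+Q_x=\int_{-\infty}^{x}f(s)g(s)\,ds$ is an absolutely continuous function of $x$, uniformly bounded by $||fg||_{L_1}$. Hence the multipliers $e^{\pm\tr K_+Q_x}$ are bounded, so $\hat f(x)=f(x)e^{\tr K_+Q_x}$ and $\hat g(y)=g(y)e^{-\tr K_+Q_y}$ lie in $L_2(R^1)$ and $\hat K_+(x,y)=\hat f(x)\hat g(y)\theta(x-y)\in L_2(R^2)$. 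Thus $\hat K_+$ is a Volterra Hilbert--Schmidt operator of positive polarity whose kernel is of rank $1$, as required.

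Finally I would make the formal derivation rigorous by checking the composition directly. For positive-polarity Volterra operators the product $AB$ has kernel $\int_{y}^{x}A(x,z)B(z,y)\,dz$ for $x>y$; applied to $A=K_+$, $B=\hat K_+$ this gives $(K_+\hat K_+)(x,y)=f(x)\hat g(y)\int_{y}^{x}g(z)\hat f(z)\,dz$ on $x>y$. Because $g(z)\hat f(z)=\frac{d}{dz}e^{\tr K_+Q_z}$, the inner integral equals $e^{\tr K_+Q_x}-e^{\tr K_+Q_y}$, and using $f(x)e^{\tr K_+Q_x}=\hat f(x)$ together with $f(x)e^{\tr K_+Q_y}\hat g(y)=f(x)g(y)$ one obtains $(K_+\hat K_+)(x,y)=\hat K_+(x,y)-K_+(x,y)$, i.e. $(I-K_+)\hat K_+=K_+$ and hence $(I-K_+)(I+\hat K_+)=I$. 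Since $K_+$ is a Volterra Hilbert--Schmidt operator it is quasinilpotent (indeed $|K_+^n(x,y)|\le\frac{1}{(n-1)!}||fg||_{L_1}^{n-1}|f(x)g(y)|$, so $||K_+^n||^{1/n}\to0$ and $\sum K_+^n$ converges), hence $I-K_+$ is invertible and the right inverse just found is the two-sided inverse; the symmetric computation for $\hat K_+K_+$ gives $(I+\hat K_+)(I-K_+)=I$ as well. I expect no real obstacle: the only care needed is the $L_1$/boundedness bookkeeping that legitimizes the ODE step in $L_2$ and the quasinilpotency remark that promotes the one-sided identity to a genuine inverse.
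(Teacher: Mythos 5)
Your proposal is correct, and it coincides with the paper's intent: the paper gives no detailed argument for Lemma \ref{4.3}, stating only that ``the proof of all lemmas is reduced to a simple verification of constructive assertions,'' and your derivation (reducing the resolvent equation to a first-order ODE with integrating factor $e^{-\tr K_+Q_x}$, checking $\hat f,\hat g\in L_2$, verifying $(I-K_+)(I+\hat K_+)=I$ by kernel composition, and securing two-sidedness via the quasinilpotency/Neumann-series bound $|K_+^n(x,y)|\le \frac{1}{(n-1)!}\|fg\|_{L_1}^{n-1}|f(x)g(y)|$) is exactly such a constructive verification, carried out in full. No gaps.
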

Lemma \ref{4.2} contains various Volterra operators $(I+A_+)^{-1}$ and $(I+B_+)^{-1}$. Let us  take a look at their explicit forms. Lemma \ref{4.2} implies
\begin{lemma}\label{4.4}
Let $K_+$ be a Hilbert--Schmidt Volterra integral operator with a kernel of rank 1 and a kernel of the form
\begin{equation}\label{4.7}
   K_+(x,y)=\frac{f(x)g(y)\theta(x-y)}{1-T(x)},\quad T(x)=\int_{-\infty}^{ x}\,f(s)g(s)\,ds.
\end{equation}
Then the operator
\begin{equation}\label{4.8}
    (I+K_+)^{-1}=I-\hat{K}_+, \quad \hat{K}_+(x,y)=\frac{f(x)g(y)\theta (x-y)}{1-T(y)}.
\end{equation}
\end{lemma}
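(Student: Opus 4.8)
The plan is to verify the asserted identity by a direct computation with the separable kernels, everything reducing to the primitive relation $\int_y^x f(s)g(s)\,ds=T(x)-T(y)$; a second, essentially equivalent route is to put $K_+$ into the exponential normal form supplied by Lemma~\ref{4.3}. Before the computation I would record the structural facts that make the statement meaningful. Since $f,g\in L^2(R^1)$, the primitive $T$ is continuous with $T(-\infty)=0$, and in the situation in which this lemma is applied (Lemma~\ref{4.2}, where $\|f\|_{L_2}\|g\|_{L_2}=\|K\|<1$) the Cauchy--Schwarz bound $|T(x)|\le\|f\|_{L_2}\|g\|_{L_2}<1$ shows that $1-T$ is bounded away from $0$; together with the Hilbert--Schmidt hypothesis on $K_+$ this makes both $K_+$ and the candidate operator $\hat K_+$ with kernel $f(x)g(y)\theta(x-y)/(1-T(y))$ genuine Hilbert--Schmidt integral operators. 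In particular $I+K_+$ and $I-\hat K_+$ are bounded, and since $K_+$ is a Volterra operator (hence quasinilpotent), $I+K_+$ is invertible, so it is enough to check one of the two composition identities.

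Then I would compute the composed kernels. For $x>y$,
$$(K_+\hat K_+)(x,y)=\int_y^x\frac{f(x)g(s)}{1-T(x)}\,\frac{f(s)g(y)}{1-T(y)}\,ds=\frac{f(x)g(y)\,(T(x)-T(y))}{(1-T(x))(1-T(y))},$$
so the kernel of $K_+-\hat K_+-K_+\hat K_+$ on $\{x>y\}$ is $f(x)g(y)$ times
$$\frac{1}{1-T(x)}-\frac{1}{1-T(y)}-\frac{T(x)-T(y)}{(1-T(x))(1-T(y))}=\frac{(1-T(y))-(1-T(x))-(T(x)-T(y))}{(1-T(x))(1-T(y))}=0,$$
i.e. $(I+K_+)(I-\hat K_+)=I$. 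Symmetrically, the substitution $T'=fg$ gives $(\hat K_+K_+)(x,y)=f(x)g(y)\int_y^x f(s)g(s)(1-T(s))^{-2}\,ds=f(x)g(y)\bigl(\tfrac1{1-T(x)}-\tfrac1{1-T(y)}\bigr)$, whence $(I-\hat K_+)(I+K_+)=I$ directly. Either way one concludes $(I+K_+)^{-1}=I-\hat K_+$ with the stated kernel.

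As an alternative I would match $K_+$ with Lemma~\ref{4.3}: seeking $F,G$ and $S(x)=\int_{-\infty}^x F(s)G(s)\,ds$ with $F(x)G(y)e^{S(x)-S(y)}=f(x)g(y)/(1-T(x))$, separation of variables forces $FG=T'/(1-T)$, hence $S=-\ln(1-T)$ (well defined since $1-T>0$), and then $F=f$, $G=g/(1-T)\in L^2$. Thus $K_+$ is exactly the operator produced by Lemma~\ref{4.3} from the Volterra operator $L_+$ with kernel $F(x)G(y)\theta(x-y)=f(x)g(y)\theta(x-y)/(1-T(y))$ --- which is precisely $\hat K_+$ --- so Lemma~\ref{4.3} yields $(I-\hat K_+)^{-1}=I+K_+$, the claim. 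The only genuinely delicate point in either argument is the one flagged above: ensuring that $1-T$ never vanishes, so that the kernels of $K_+$ and $\hat K_+$ are well defined pointwise and lie in $L^2(R^2)$ and the formal operator manipulations are justified by Fubini. Everything past that is the two-line cancellation displayed above.
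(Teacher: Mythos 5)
Your proposal is correct and amounts to exactly what the paper intends: the paper disposes of all the Section~4 lemmas with ``simple verification of constructive assertions,'' and your direct computation of the composed kernels, which reduces both identities $(I+K_+)(I-\hat K_+)=I$ and $(I-\hat K_+)(I+K_+)=I$ to the telescoping relation $\int_y^x f(s)g(s)\,ds=T(x)-T(y)$ (together with $\int_y^x T'(s)(1-T(s))^{-2}ds=\tfrac{1}{1-T(x)}-\tfrac{1}{1-T(y)}$), is precisely that verification. Your explicit attention to $1-T$ being bounded away from zero (via $\|f\|_{L_2}\|g\|_{L_2}<1$ in the context of Lemma 4.2, where this lemma is used), and the alternative derivation from Lemma 4.3, are sound additions but not departures from the paper's route.
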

In the simplest case, when we have $g(y)=\overline{f}(y)$  in  lemma \ref{4.4}  a nonlinear operator arises that  maps the function $f(x)$ into the function
$\displaystyle F(x)=\frac{f(x)}{1-\int_{-\infty}^{x}\,|f(s)|^2\,ds} $.
Such an operator has important properties. This type of nonlinear operators can play an important independent role.
\begin{lemma}\label{4.5}
In the space $L_2(R^1)$, the nonlinear operator $\mathcal{A}_k$ depending on the positive parameter $k<1$ and acting according to the formula
\begin{equation}\label{4.9}
    \mathcal{A}_k f(x)=\frac{\sqrt{1-k^2}f(x)}{1-\frac{k^2}{||f||_{L_2}^2 }\int_{-\infty}^{x}\,|f(s)|^2\,ds}.
  \end{equation}
  The operator $\mathcal{A}_k$ maps  each function $f\in L_2$,\,$||f||\neq0$ into the function $F(x)=\mathcal{A}_k f(x), $ \,$L_2$-- norm is equal to $||f||_{L_2}:$\,$||F||_{L_2}=||f||_{L_2}.$  The inverse operator $\mathcal{A}_k^{-1}$ has the form
\begin{equation}\label{4.10}
   \mathcal{A}_k^{-1}F(x)=\frac{\sqrt{1-k^2}F(x)}{1-\frac{k^2}{||F||_ {L_2}^2}\int_{x}^{+\infty}\,|F(s)|^2\,ds}.
\end{equation}
In the case of $||f||=0$ and $||F||=0$, we set $\mathcal{A}_k0=0$,\,$\mathcal{A}_k^{-1}0= 0$.
\end{lemma}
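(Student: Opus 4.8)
The plan is to reduce the whole statement to a single change of variables. Put $c^2=||f||_{L_2}^2$ and $T(x)=\int_{-\infty}^{x}|f(s)|^2\,ds$, so that $T$ is nondecreasing and absolutely continuous with $T(-\infty)=0$, $T(+\infty)=c^2$, and $T'(x)=|f(x)|^2$ a.e. First I would note that the denominator $D(x):=1-\frac{k^2}{c^2}T(x)$ takes values in $[1-k^2,1]$, which is bounded away from $0$ since $0<k<1$; hence $\mathcal{A}_k f(x)=\sqrt{1-k^2}\,f(x)/D(x)$ is a well-defined measurable function with $|\mathcal{A}_k f(x)|\le(1-k^2)^{-1/2}|f(x)|$, so $\mathcal{A}_k f\in L_2$ and $\mathcal{A}_k$ is genuinely an operator on $L_2$. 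The case $||f||=0$ is settled by the stated convention, so below I take $||f||\neq0$.

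The key computation is the norm identity. I would write
\[
||\mathcal{A}_k f||_{L_2}^2=(1-k^2)\int_{\mathbb{R}}\frac{|f(x)|^2}{D(x)^2}\,dx=(1-k^2)\int_{\mathbb{R}}\frac{T'(x)}{\bigl(1-\frac{k^2}{c^2}T(x)\bigr)^2}\,dx
\]
and substitute $u=T(x)$, which turns the right-hand side into $(1-k^2)\int_{0}^{c^2}\bigl(1-\frac{k^2}{c^2}u\bigr)^{-2}\,du$. The elementary antiderivative $\frac{c^2}{k^2}\bigl(1-\frac{k^2}{c^2}u\bigr)^{-1}$ evaluates this to $\frac{c^2}{k^2}\bigl(\frac{1}{1-k^2}-1\bigr)=\frac{c^2}{1-k^2}$, so $||\mathcal{A}_k f||_{L_2}^2=c^2=||f||_{L_2}^2$, which is the asserted isometry.

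For the inverse I would set $F=\mathcal{A}_k f$, $\widetilde T(x)=\int_{x}^{+\infty}|F(s)|^2\,ds$, and record $\widetilde T(-\infty)=||F||_{L_2}^2=c^2$ by the previous step. Solving $F=\sqrt{1-k^2}\,f/D$ pointwise gives $f(x)=D(x)F(x)/\sqrt{1-k^2}$, so it remains to express $D(x)$ through $F$. Running the same substitution $u=T(x)$ in the integral defining $\widetilde T(x)$, now over $u\in[T(x),c^2]$, and using the same antiderivative, I obtain $\widetilde T(x)=\frac{c^2}{k^2}\bigl(1-\frac{1-k^2}{D(x)}\bigr)$, equivalently $D(x)=(1-k^2)\bigl(1-\frac{k^2}{c^2}\widetilde T(x)\bigr)^{-1}$. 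Substituting into $f(x)=D(x)F(x)/\sqrt{1-k^2}$ and using $c^2=||F||_{L_2}^2$ reproduces formula (\ref{4.10}) exactly, so $\mathcal{A}_k^{-1}\mathcal{A}_k=\mathrm{id}$; and since the reflection $R\varphi(x)=\varphi(-x)$ interchanges the two formulas, $\mathcal{A}_k^{-1}=R\,\mathcal{A}_k\,R$, one also gets $\mathcal{A}_k\mathcal{A}_k^{-1}=R(\mathcal{A}_k^{-1}\mathcal{A}_k)R=\mathrm{id}$, so (\ref{4.10}) is the genuine two-sided inverse of (\ref{4.9}). The only step that is not purely routine is the justification of the change of variables $u=T(x)$ when $f$ vanishes on a set of positive measure, so that $T$ is merely nondecreasing rather than strictly increasing; this is covered by the standard substitution rule for a monotone absolutely continuous function against a nonnegative integrand, and it is the one technical point I would spell out carefully.
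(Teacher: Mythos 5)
Your verification is correct and complete: the substitution $u=T(x)$ (equivalently, applying the fundamental theorem of calculus to the absolutely continuous function $x\mapsto \frac{c^2}{k^2}\bigl(1-\frac{k^2}{c^2}T(x)\bigr)^{-1}$, which also disposes of the non-strict-monotonicity issue you flag) yields the isometry, the same computation expresses $D(x)$ through $\widetilde T(x)$ and hence recovers the stated inverse formula, and the reflection identity $\mathcal{A}_k^{-1}=R\,\mathcal{A}_k\,R$ legitimately upgrades the one-sided inverse to a two-sided one. The paper gives no argument for this lemma beyond the remark that the proof ``is reduced to a simple verification of constructive assertions,'' and your computation is exactly that verification carried out in detail, so there is no difference in approach to report.
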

  As a special case of  lemma \ref{4.5} is the following statement.
\begin{lemma}\label{4.6}
A nonlinear operator $\mathcal{A}$ defined on functions $f\in L_2(R^1)$ with condition $||f||<1$ by equality
\begin{equation}\label{4.11}
    \mathcal{A}f(x)=\frac{f(x)\sqrt{1-||f||^2}}{1-\int_{-\infty}^{x}\,|f( s)|^2\,ds}
\end{equation}
preserves the norm $||\mathcal{A}f||=||f||.$ The inverse operator $\mathcal{A}^{-1}$ has the form
\begin{equation}\label{4.12}
      \mathcal{A}^{-1}g(y)=\frac{g(y)\sqrt{1-||g||^2}}{1-\int_{x}^{\infty}\,|g(s)|^2\,ds}.
\end{equation}
\end{lemma}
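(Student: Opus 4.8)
The plan is to derive Lemma \ref{4.6} as the specialization of Lemma \ref{4.5} to the parameter value $k=||f||$; the only subtlety is that in Lemma \ref{4.5} the number $k$ is a fixed constant, whereas in Lemma \ref{4.6} it must be tied to the argument $f$, so I would begin by checking that this substitution is legitimate. Fix $f\in L_2(R^1)$ with $0<||f||<1$ and put $k:=||f||\in(0,1)$. Inserting this value into (\ref{4.9}), the coefficient $k^2/||f||^2$ becomes $1$ and $\sqrt{1-k^2}=\sqrt{1-||f||^2}$, so the right-hand side of (\ref{4.9}) coincides verbatim with (\ref{4.11}); hence $\mathcal{A}f=\mathcal{A}_{||f||}f$, and Lemma \ref{4.5} gives $||\mathcal{A}f||=||\mathcal{A}_{||f||}f||=||f||$ immediately. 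The case $||f||=0$ is the convention $\mathcal{A}0=0$. I would also record that (\ref{4.11}) is meaningful: since $\int_{-\infty}^{x}|f(s)|^2\,ds\le||f||^2<1$, the denominator there is $\ge 1-||f||^2>0$, so $\mathcal{A}f$ is a well-defined element of $L_2(R^1)$ with $||\mathcal{A}f||<1$, i.e.\ $\mathcal{A}$ maps the open unit ball of $L_2(R^1)$ into itself.

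For the inverse, set $g:=\mathcal{A}f$. By the norm identity just established, $||g||=||f||=k$, so the parameter reconstructing $f$ from $g$ is again $k=||g||$. Substituting this value into (\ref{4.10}), the factor $k^2/||g||^2$ equals $1$ and the formula becomes exactly (\ref{4.12}), with the integral read as $\int_{y}^{\infty}|g(s)|^2\,ds$ (the lower limit $x$ in the displayed formula being a typo for $y$). Since $\mathcal{A}_k^{-1}\mathcal{A}_k$ is the identity on $L_2(R^1)$ by Lemma \ref{4.5} and the same value of $k$ appears on both sides, this yields $\mathcal{A}^{-1}\mathcal{A}f=f$; running the argument from an arbitrary $g$ with $||g||<1$, with $f:=\mathcal{A}^{-1}g$ and $k=||g||$, gives $\mathcal{A}\mathcal{A}^{-1}g=g$, so (\ref{4.12}) is the genuine two-sided inverse.

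Should one prefer a self-contained proof not quoting Lemma \ref{4.5}, the only unavoidable computation is short, and I would present it as an alternative. Put $T(x)=\int_{-\infty}^{x}|f(s)|^2\,ds$ and $m=||f||$. Then $|\mathcal{A}f(x)|^2=(1-m^2)\,\dfrac{d}{dx}\dfrac{1}{1-T(x)}$, so integration over $R^1$ telescopes to $(1-m^2)\bigl(\tfrac{1}{1-m^2}-1\bigr)=m^2$, which is the norm identity. The same primitive gives $1-\int_{y}^{\infty}|\mathcal{A}f(s)|^2\,ds=\dfrac{1-m^2}{1-T(y)}$; substituting this together with $\mathcal{A}f(y)=\dfrac{f(y)\sqrt{1-m^2}}{1-T(y)}$ and $||\mathcal{A}f||=m$ into (\ref{4.12}) makes the factors $(1-m^2)/(1-T(y))$ cancel, returning $f(y)$. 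The main obstacle in either route is purely bookkeeping — ensuring the parameter used in $\mathcal{A}^{-1}$ matches the one implicitly present in $\mathcal{A}$, which is guaranteed precisely because $\mathcal{A}$ preserves the norm; there is no analytic difficulty.
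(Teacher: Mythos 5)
Your proposal is correct and matches the paper's own treatment: the paper introduces Lemma 4.6 explicitly as the special case of Lemma 4.5 obtained by taking $k=||f||$ (so that $k^2/||f||^2=1$ and $\sqrt{1-k^2}=\sqrt{1-||f||^2}$), leaving the details as ``a simple verification of constructive assertions,'' which is precisely the specialization plus direct computation you carry out. Your observation that the lower limit $x$ in (4.12) should read $y$ is also correct.
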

The proof of all lemmas is reduced to a simple verification of constructive assertions.
\subsection{Explicit form of the scattering operator and potential}
\begin{theorem}\label{4.1}
Let the scattering operator $S=||F_{ij}||_{i,j=1}^{2}$ be unitary in the case of a skew-symmetric potential in the Dirac system. Let the scattering data of integral operator $F_{21}$ have rank 1 and its kernel $f(x,y)=kf(x)g(y)$, where the functions $f, g \in L_2$,\,$ ||f||=||g||=1$ ,\,$k<1$. Then all elements $F_{ij}$ of the scattering operator are Hilbert--Schmidt integral operators, and their kernels $F_{ij}(x,y)$ are explicitly expressed in terms of $f(x,y)$ as
\begin{equation}\label{4.13}
\begin{array}{l}
\displaystyle F_{11}(x,y)= \frac{-k^2 \overline{g(x)}g(y)}{1-k^2\int_{x}^{\infty}\, |g(s)|^2\,ds}=\frac{-k^2}{\sqrt{1-k^2}}\overline{\mathcal{A}_k^{-1}g(x) }\cdot g(y),\\[3mm]
   \displaystyle F_{22}(x,y)= \frac{-k^2 f(x)\overline{f(y)}}{1-k^2\int_{-\infty}^{x}\ ,|f(s)|^2\,ds}=\frac{-k^2}{\sqrt{1-k^2}} f(x) \cdot \overline{\mathcal{A}_k f( y)}\\[4mm]
   \displaystyle F_{12}(x,y)= -k \overline{\mathcal{A}_k^{-1}g(x)} \cdot \overline{\mathcal{A}_k f(y)},
\end{array}
\end{equation}
where $$\mathcal{A}_kf=\frac{\sqrt{1-k^2}f(x)}{1-k^2\int_{-\infty}^{x}\,|f(s )|^2\,ds} ,\quad
\mathcal{A}_k^{-1}g(x)=\frac{\sqrt{1-k^2}g(x)}{1-k^2\int_{x}^{\infty}\ ,|g(s)|^2\,ds}.$$
\end{theorem}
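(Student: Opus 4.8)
The plan is to bootstrap everything from the single rank-one operator $F_{21}$, using the unitarity of $S$ together with the block-level consequences of $SS^{-1}=S^{-1}S=I$ and the explicit Volterra factorization of Lemma~\ref{4.2}. Since $S$ is unitary, $S^{-1}=S^{*}$, so $G=F^{*}$ entry by entry: $G_{11}=F_{11}^{*}$, $G_{22}=F_{22}^{*}$, $G_{12}=F_{21}^{*}$, $G_{21}=F_{12}^{*}$ (the last two equivalent to Theorem~\ref{3}). From $F_{21}$ having kernel $kf(x)g(y)$ with $\|f\|=\|g\|=1$ one reads off at once that $F_{21}F_{21}^{*}$ is the rank-one operator with kernel $k^{2}f(x)\overline{f(y)}$ and $F_{21}^{*}F_{21}$ the rank-one operator with kernel $k^{2}\overline{g(x)}g(y)$; both are positive, self-adjoint, with trace and operator norm equal to $k^{2}<1$. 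Expanding $SS^{-1}=I$ and $S^{-1}S=I$ in $2\times 2$ blocks and inserting the unitarity relations gives the three identities I will exploit:
\begin{align*}
(I+F_{22})(I+F_{22}^{*})&=I-F_{21}F_{21}^{*},\\
(I+F_{11}^{*})(I+F_{11})&=I-F_{21}^{*}F_{21},\\
(I+F_{22})F_{12}^{*}&=-F_{21}(I+F_{11}^{*}).
\end{align*}

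First I would recover the diagonal blocks. The right-hand sides of the first two identities are $I$ minus a rank-one Hilbert--Schmidt operator of norm $k^{2}<1$, so Lemma~\ref{4.2} applies and supplies explicit two-sided Volterra factorizations of them. Now $F_{11},F_{22}$ are Volterra of positive polarity (variable upper limit, kernels carrying a factor $\theta(x-y)$) and $F_{11}^{*},F_{22}^{*}$ of negative polarity, while a factorization of $I-K$ into a positive-polarity Volterra factor times a negative-polarity one, or vice versa, is unique --- an operator that is simultaneously $I$ plus a positive-polarity and $I$ plus a negative-polarity Hilbert--Schmidt Volterra operator has kernel supported on the diagonal, hence equals $I$. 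This lets me match the factors: $I+F_{22}=(I+A_{+})^{-1}$, $I+F_{22}^{*}=(I+A_{-})^{-1}$, $I+F_{11}=(I+B_{+})^{-1}$, $I+F_{11}^{*}=(I+B_{-})^{-1}$, with $A_{\pm},B_{\pm}$ the operators of Lemma~\ref{4.2} formed from the two rank-one kernels above. Inverting these rank-one Volterra factors by the resolvent formula of Lemma~\ref{4.3} (for $A_{+}$ this is literally Lemma~\ref{4.4}) yields exactly the stated kernels for $F_{22}$ and $F_{11}$: the denominators $1-k^{2}\int_{-\infty}^{x}|f|^{2}$ and $1-k^{2}\int_{x}^{\infty}|g|^{2}$ already present in $A_{+}$ and $B_{+}$ make the exponential collapse to a single rational factor. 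Rewriting $1/(1-k^{2}\int\cdots)$ via the norm-preserving operators $\mathcal{A}_{k},\mathcal{A}_{k}^{-1}$ of Lemma~\ref{4.5} gives the alternative forms.

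For the off-diagonal block I would rewrite the third identity as $F_{12}=-(I+F_{11})F_{21}^{*}(I+F_{22}^{*})^{-1}$, where every factor is now explicit: $I+F_{11}$ is $I$ minus a rank-one positive-polarity Volterra operator, $F_{21}^{*}$ is rank one, and $(I+F_{22}^{*})^{-1}=I+A_{-}$ is $I$ plus a rank-one negative-polarity Volterra operator. Multiplying out --- using that a product involving a rank-one operator is again rank one, so that at each stage one tracks only a scalar function of $x$ times a scalar function of $y$ --- the two integrations telescope, each leaving a factor $1-k^{2}\int\cdots$ that partially cancels, and the result is
\begin{align*}
F_{12}(x,y)&=\frac{-k(1-k^{2})\,\overline{g(x)}\,\overline{f(y)}}{\bigl(1-k^{2}\int_{x}^{\infty}|g(s)|^{2}\,ds\bigr)\bigl(1-k^{2}\int_{-\infty}^{y}|f(s)|^{2}\,ds\bigr)}\\
&=-k\,\overline{\mathcal{A}_{k}^{-1}g(x)}\cdot\overline{\mathcal{A}_{k}f(y)}.
\end{align*}
Finally, since $k^{2}\int_{-\infty}^{x}|f|^{2}\le k^{2}<1$ and $k^{2}\int_{x}^{\infty}|g|^{2}\le k^{2}<1$, every denominator in these formulas is bounded below by $1-k^{2}>0$, so each kernel $F_{ij}(x,y)$ is a bounded multiple of a product of two $L_{2}$ functions, hence lies in $L_{2}(\mathbb{R}^{2})$ and defines a Hilbert--Schmidt operator.

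I expect the crux to be the second step: deciding which Volterra factor of Lemma~\ref{4.2} equals $I+F_{22}$ and which equals $I+F_{11}$ (and likewise for the adjoints). This rests entirely on the uniqueness of the triangular (Volterra) factorization and on keeping the polarities straight --- positive polarity $\leftrightarrow$ variable upper limit $\leftrightarrow$ $\theta(x-y)$ --- and a slip there would put the wrong integration limits into the denominators. Once the factors are matched, the inversions are mechanical; the only remaining care is in the telescoping of the two integrals in the $F_{12}$ product, which must be organized so as to land precisely on the product of those two denominators.
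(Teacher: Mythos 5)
Your proof is correct and takes essentially the same route as the paper: the paper's own one-line proof likewise combines the factorization identities of Remark 2.1 (equation (2.3)) with unitarity ($G_{ij}=F_{ji}^{*}$) and Lemmas 4.2--4.4, which you have simply carried out in detail, including the off-diagonal block identity needed for $F_{12}$ and the uniqueness-of-Volterra-factorization argument for matching the diagonal factors. Your resulting kernels agree with the $\mathcal{A}_k$-forms in the theorem (the integration variable in the denominator of the paper's first expression for $F_{22}$ appears to be a typo, as your computation with denominator $1-k^{2}\int_{-\infty}^{y}|f(s)|^{2}\,ds$ shows).
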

\begin{proof}
By virtue of  equation (\ref{2.3}) in Remark \ref{2.1} and  above lemmas \ref{4.2}-\ref{4.4}, we have  (\ref{4.13}).
\end{proof}
\begin{remark}\label{4.8}
As follows from  theorem \ref{4.1}, if the scattering data $F_{21}$ is a Hilbert--Schmidt integral operator with the norm $||F_{21}||=k<1,$ then the other scattering data $G_ {21}=F_{12}^*$ are Hilbert--Schmidt integral operators of rank 1 with the norm $||G_{21}||=||F_{12}||=k.$
\end{remark}
\begin{theorem}\label{4.2}
Let all the conditions of  theorem \ref{4.1} be satisfied. Then the potential $u(x,y)$ is expressed in terms of the scattering data $f(x,y)$ as
\begin{equation}\label{4.14}
   u(x,y)=\frac{\overline{f(x,y)}}{1-\int_{-\infty}^{x}\,d\xi \int_{y}^{\infty} \,d\eta|f(\xi,\eta)|^2}.
\end{equation}
\end{theorem}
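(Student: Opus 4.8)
The plan is to specialize the reconstruction formula (\ref{2.6})--(\ref{2.7}) for $\mathfrak{A}^{-1}$ to rank-$1$ scattering data, performing the single operator inversion it requires by means of Lemma~4.1. Write $F=F_{21}$ for the scattering-data operator, so that $F$ has kernel $f(\xi,\eta)=kf(\xi)g(\eta)$ with $\|f\|_{L_2}=\|g\|_{L_2}=1$ and $0<k<1$; then $\|F\|=k<1$, and $F^{*}$ is the rank-$1$ integral operator with kernel $\overline{f(\eta,\xi)}=k\,\overline{g(\xi)}\,\overline{f(\eta)}$.

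First I would evaluate the operator entering (\ref{2.6}). Since $P_{\mu}$ and $Q_{\lambda}$ act by multiplication by $\theta(\cdot-\mu)$ and $\theta(\lambda-\cdot)$ respectively, a direct computation with the rank-$1$ kernels shows that $F^{*}Q_{\lambda}FP_{\mu}$ is again rank $1$, with kernel $k^{2}\bigl(\int_{-\infty}^{\lambda}|f(s)|^{2}\,ds\bigr)\overline{g(\xi)}\,g(\eta)\,\theta(\eta-\mu)$, so that
$$\tr\bigl(F^{*}Q_{\lambda}FP_{\mu}\bigr)=\tau(\lambda,\mu),\qquad \tau(\lambda,\mu)=k^{2}\Bigl(\int_{-\infty}^{\lambda}|f(s)|^{2}\,ds\Bigr)\Bigl(\int_{\mu}^{\infty}|g(s)|^{2}\,ds\Bigr).$$
Each factor lies in $[0,1]$, so $0\le\tau(\lambda,\mu)\le k^{2}<1$; in particular the trace is never $1$, and Lemma~4.1 yields
$$\bigl(I-F^{*}Q_{\lambda}FP_{\mu}\bigr)^{-1}=I+\frac{1}{1-\tau(\lambda,\mu)}\,F^{*}Q_{\lambda}FP_{\mu}.$$

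Next I would substitute this into (\ref{2.6}), obtaining $U_{\lambda,\mu}=F^{*}+\frac{1}{1-\tau(\lambda,\mu)}F^{*}Q_{\lambda}FP_{\mu}F^{*}$. The extra right factor $F^{*}$ feeds the scalar $\int_{\mu}^{\infty}|g|^{2}$ through $P_{\mu}$ and then $\int_{-\infty}^{\lambda}|f|^{2}$ through $Q_{\lambda}$ into the same rank-$1$ structure, so one checks $F^{*}Q_{\lambda}FP_{\mu}F^{*}=\tau(\lambda,\mu)\,F^{*}$, and therefore
$$U_{\lambda,\mu}=\frac{1}{1-\tau(\lambda,\mu)}\,F^{*},\qquad U_{\lambda,\mu}(\xi,\eta)=\frac{k\,\overline{g(\xi)}\,\overline{f(\eta)}}{1-\tau(\lambda,\mu)}.$$
Finally, substituting $(\xi,\eta,\lambda,\mu)=(y,x,x,y)$ as required by (\ref{2.7}) gives $u(x,y)=\frac{k\,\overline{g(y)}\,\overline{f(x)}}{1-\tau(x,y)}$. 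Since $k$ is real the numerator equals $\overline{kf(x)g(y)}=\overline{f(x,y)}$, and since $|f(\xi,\eta)|^{2}=k^{2}|f(\xi)|^{2}|g(\eta)|^{2}$ the denominator $1-\tau(x,y)$ equals $1-\int_{-\infty}^{x}d\xi\int_{y}^{\infty}d\eta\,|f(\xi,\eta)|^{2}$. This is precisely (\ref{4.14}).

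The argument is purely computational once the rank-$1$ structure is exploited; the only delicate point is the bookkeeping of the two spectral parameters $\lambda,\mu$ and of which slot of the kernel $u(\xi,\eta;\lambda,\mu)$ becomes $x$ and which becomes $y$ under the diagonal substitution in (\ref{2.7}) --- keeping the ranges of integration ($\int_{-\infty}^{x}$ against $\int_{y}^{\infty}$) consistent with the definitions of $Q_{\lambda}$ and $P_{\mu}$ is where a slip would most easily occur. One could instead run the general algorithm (\ref{2.a})--(\ref{2.2a}) with the pair $(F,G)=(F_{21},F_{21}^{*})$ and the explicit Volterra factorizations of Lemmas~4.2--4.4 (essentially the route used for Theorem~4.1), but the specialization (\ref{2.6})--(\ref{2.7}) makes the shortcut above available and is cleaner.
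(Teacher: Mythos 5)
Your proposal is correct and follows essentially the same route as the paper: specialize the reconstruction formulas (\ref{2.6})--(\ref{2.7}) to the rank-$1$ data, invert $I-F^{*}Q_{\lambda}FP_{\mu}$ via Lemma~\ref{4.1} using the trace $\tr(F^{*}Q_{\lambda}FP_{\mu})=\int_{-\infty}^{\lambda}d\xi\int_{\mu}^{\infty}d\eta\,|f(\xi,\eta)|^{2}$, and set $(\lambda,\mu)=(x,y)$ with the kernel evaluated at $(y,x)$. You merely make explicit the collapse $F^{*}Q_{\lambda}FP_{\mu}F^{*}=\tau(\lambda,\mu)F^{*}$, which the paper's terse proof leaves implicit.
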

\begin{proof}
For the potential $u$ there are formulas (\ref{2.6})--(\ref{2.7}). Let us apply Lemma \ref{4.1} to the operator $(I-F^*Q_{\lambda}FP_{\mu})^{-1}$. For this, we find that
$$\tr (F^*Q_{\lambda}FP_{\mu})= \int_{-\infty}^{\lambda}\,d\xi \int_{\mu}^{\infty}\, d\eta |f(\xi,\eta)|^2.
$$
Then the representation (\ref{4.14}) follows  from (\ref{2.6}) due to (\ref{2.7}).
\end{proof}

\section{Explicit Solutions with Scattering Data of Rank 1}
If $u(x,y;t)$ is a solution of the equation (\ref{1.1}) with pseudopotentials
$v_1$,\,$v_2$ representable as (\ref{1.5}) with boundary data
$p_-(y,t)$,\,$q_+(x,t)$, which are real, continuous, and bounded functions of their arguments, then the IST of such a solution is an integral operator $F(t)$ with the Hilbert--Schmidt  kernel $f( x,y;t)$ , and this kernel satisfies the linear differential equation (\ref{1.8}). Here norm $||F(t)||= ||F(0)||<1$. If the integral operator $F(t)$ has rank 1, then $f(x,y;t)=f(x,t)\cdot g(y,t).$ Moreover, $||F(t)| |=||f||_{L_2}\cdot||g||_{L_2}.$ The solution $u(x,y;t)$, as the potential of the two-dimensional Dirac system, is uniquely and explicitly expressed in terms of the scattering data $ F(t)$ in the form:
\begin{equation}\label{5.1}
   u(x,y;t)=\frac{\overline{f}(x,t)\cdot\overline{g}(y,t)}{1-F(x)G(y)},
\end{equation}
where
\begin{equation}\label{5.2}
F(x)=\int_{-\infty}^{x}\,|f(s,t)|^2\,ds, \quad G(y)=\int_{y}^{+\infty} \,|g(s,t)|^2\,ds.
\end{equation}
The opposite is also true:
\begin{theorem}
Let $ f(x,t)$ and $ g(y,t)$ be solutions of linear equations
\begin{equation}\label{5.3}
   \begin{array}{c}
     \Bigl(i\frac{\partial}{\partial t} + \frac{\partial^2}{\partial x^2}\Bigr)\overline{f}(x,t)+q_+(x, t)\overline{f} (x,t)=0,\\
      \Bigl( i\frac{\partial}{\partial t} + \frac{\partial^2}{\partial y^2} \Bigr)\overline{g}(y,t)+p_-(y, t)\overline{g} (y,t)=0,
   \end{array}
\end{equation}
where $f(x,0)$,\,$ g(y,0)$ are arbitrary functions from $L_2$ such that
$||f(x,0)||_{L_2}\cdot||g(y,0)||_{L_2}<1,$ and functions $p_-(y,t)$,\,$ q_+(x,t)$ are real, continuous, bounded functions of their arguments. Then the formula (\ref{5.1}) is a solution of the equation (\ref{1.1}) for $t>0$ the representations (\ref{1.5}) are satisfied, and for $t=0$ the solution $u(x,y; t)=u(x,y;0)$.
\end{theorem}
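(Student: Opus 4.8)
The plan is to combine the inverse–scattering picture assembled in Sections 2--4 with a short direct check, using the latter as the actual verification. Conceptually the statement is the closing step of the scheme: the rank-one kernel $K(x,y;t):=f(x,t)g(y,t)$ is exactly the evolving scattering data. Indeed, conjugating the two equations (\ref{5.3}) and using that $p_-,q_+$ are real gives $if_t-f_{xx}-q_+f=0$ and $ig_t-g_{yy}-p_-g=0$, whence
\[\bigl(i\partial_t-\partial_x^2-\partial_y^2-p_--q_+\bigr)K=(if_t-f_{xx}-q_+f)\,g+f\,(ig_t-g_{yy}-p_-g)=0,\]
i.e. $K$ is a rank-one solution of (\ref{3.5}). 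Since by Theorem~\ref{3.1} the norms $\|f(\cdot,t)\|_{L_2}$ and $\|g(\cdot,t)\|_{L_2}$ are conserved, the integral operator with kernel $K(\cdot,\cdot;t)$ keeps norm $\|f(\cdot,0)\|\,\|g(\cdot,0)\|=:k<1$ and rank $1$ for all $t\ge0$ (Theorem~\ref{3.2} and the remarks after it), hence stays in the range of $\mathfrak A$; and applying the rank-one potential formula (\ref{4.14}) to $K(\cdot;t)$ reproduces exactly (\ref{5.1}). The ``main property of $\mathfrak A$'' read in reverse then says $u=\mathfrak A^{-1}K(\cdot;t)$ solves (\ref{1.1}) with $v_1,v_2$ as in (\ref{1.5}). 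Because this reverse implication is the dressing argument for the Lax pair (\ref{1.4}) and is not written out, I would instead verify (\ref{5.1}) directly; everything being explicit, this is short.

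So I would let $u$ be given by (\ref{5.1}) and define $v_1,v_2$ as the right-hand sides of (\ref{1.5}); then (\ref{1.2}) and the prescribed limits $v_1(-\infty,y;t)=p_-(y,t)$, $v_2(x,+\infty;t)=q_+(x,t)$ hold by construction, and only (\ref{1.1}), the membership $u(\cdot;t)\in L_2(\R^2)$ and attainment of the initial data remain. The last two are easy: since $F(x)\le\|f(\cdot,t)\|^2$ and $G(y)\le\|g(\cdot,t)\|^2$, the denominator $D:=1-F(x)G(y)$ obeys $D\ge 1-k^2>0$ uniformly in $t\ge0$ (Theorem~\ref{3.1}), so $\|u(\cdot;t)\|_{L_2(\R^2)}\le k/(1-k^2)$ and $u(\cdot;t)\to u(\cdot;0)$ as $t\to0^+$ by continuity of $f,g$ in $t$; in particular $D$ never vanishes.

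For (\ref{1.1}) I would record three consequences of (\ref{5.3}). First, $w:=\overline f\,\overline g$ satisfies $iw_t+w_{xx}+w_{yy}+(p_-+q_+)w=0$ directly from the two equations. Second, $\partial_t|f|^2=i\,\partial_x(f\overline f_x-\overline f f_x)$ and $\partial_t|g|^2=i\,\partial_y(g\overline g_y-\overline g g_y)$; integrating these and discarding the boundary terms at $\mp\infty$ gives $\partial_tF=i(f\overline f_x-\overline f f_x)$ and $\partial_tG=-i(g\overline g_y-\overline g g_y)$. Third, setting $\Phi:=\log D$ one finds $|u|^2=\Phi_{xy}$, and, since $F(-\infty)=0$ and $G(+\infty)=0$, the boundary-at-infinity terms in (\ref{1.5}) drop out, collapsing it to $v_1=p_-+2\Phi_{yy}$ and $v_2=q_++2\Phi_{xx}$.

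Then I would substitute $u=w/D$ into $iu_t+u_{xx}+u_{yy}+(v_1+v_2)u$ and expand. The $(p_-+q_+)$-terms cancel by the equation for $w$; the term $2(\Phi_{xx}+\Phi_{yy})w/D$ coming from $v_1+v_2$, rewritten via $\Phi_{xx}=D_{xx}/D-D_x^2/D^2$ and its $y$-analogue, cancels the $D_x^2/D^3$ and $D_y^2/D^3$ contributions of $u_{xx}+u_{yy}$, and the whole expression reduces to
\[\frac{w}{D^2}\Bigl[D_{xx}+D_{yy}-iD_t-2\,\frac{w_xD_x+w_yD_y}{w}\Bigr].\]
Inserting $D_x=-|f|^2G$, $D_y=F|g|^2$, $D_{xx}=-(|f|^2)_xG$, $D_{yy}=F(|g|^2)_y$, $-iD_t=-(f\overline f_x-\overline f f_x)G+F(g\overline g_y-\overline g g_y)$ (from the conservation laws above), and $w_x/w=\overline f_x/\overline f$, $w_y/w=\overline g_y/\overline g$, the bracket becomes $G\bigl[-(|f|^2)_x-(f\overline f_x-\overline f f_x)+2f\overline f_x\bigr]+F\bigl[(|g|^2)_y+(g\overline g_y-\overline g g_y)-2g\overline g_y\bigr]$, and each square bracket is identically $0$ by $(|f|^2)_x=f_x\overline f+f\overline f_x$ (respectively $(|g|^2)_y=g_y\overline g+g\overline g_y$). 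This finishes (\ref{1.1}). I expect the only real obstacle to be exactly this last cancellation together with the integrations by parts it hides --- at $x=-\infty$ in $\partial_tF$ and $v_1$, at $y=+\infty$ in $\partial_tG$ and $v_2$ --- and the differentiation under the integral in (\ref{1.5}), none of which is licensed by $L_2$-membership alone; I would handle it by first taking $f(\cdot,0),g(\cdot,0)$ in a Schwartz class (where (\ref{5.3}) propagates the needed decay and smoothness) and then passing to general $L_2$ data by density, using $D\ge 1-k^2$ to control the limit of (\ref{5.1}) in $L_2(\R^2)$.
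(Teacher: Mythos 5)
Your proposal is correct and follows essentially the same route as the paper: direct substitution of (\ref{5.1}) into (\ref{1.1}), using exactly the identities the paper records --- the time derivatives of $F$ and $G$ obtained from (\ref{5.3}) and the expression $v_1+v_2=p_-+q_++2(\Phi_{xx}+\Phi_{yy})$ with $\Phi=\log(1-FG)$, which coincides with the paper's displayed formula for $v_1+v_2$. You merely carry out in full the cancellations the paper dismisses as ``very simple'' and add the $L_2$-bound $D\ge 1-k^2$, the attainment of the initial data, and the Schwartz-plus-density justification of the boundary terms, all of which are consistent with (and slightly more careful than) the paper's argument.
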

\begin{proof}
The correctness of this statement follows from the direct substitution of $u$ of the form (\ref{5.1}) into the equation (\ref{1.1}). We only note that, in view of the accepted conditions
$$
\begin{array}{c}
\displaystyle i\frac{\partial}{\partial t}F(x)=\overline{f}(x,t)\cdot f_x'(x,t)-\overline{f}_x'(x,t )\cdot f(x,t),\\ [2mm]
\displaystyle i\frac{\partial}{\partial t} G(y)=\overline{g}_y'(y,t)\cdot g(y,t)-\overline{g}(y,t) \cdot g_y'(y,t), \\ [2mm]
  \displaystyle v_1+v_2=p_-(y,t)+q_+(x,t)+2 \Bigl[\frac{F (|g|^2)_y'-G(|g|^2)_x' }{1-F\cdot G}-\frac{G^2|f|^4+F^2|g|^4}{(1-F\cdot G)^2} \Bigr].
\end{array}
$$
The other calculations are very simple.
\end{proof}

\begin{remark}\label{5.1}
Since the pseudopotentials $v_1$ and $v_2$ are real, the equation (\ref{1.1}) has the integral of motion
$$\int\int|u(x,y;t)|^2\,dxdy=||u||_{G-Sch}^2=C,
$$
independent of $t$. For the solution (\ref{5.1}), this $C$ constant is:
\begin{equation}\label{5.8}
C=||u||_{G-Sch}^2=|\ln(1-k^2)||,\quad k=||f(x,0)||\cdot||g(y,0)||.
\end{equation}
\end{remark}
Let us give an example of solving linear equations (\ref{5.1}). Consider the Cauchy problem for such an equation
\begin{equation}\label{5.9}
  i \frac{\partial p(x,t)}{\partial t}+\frac{\partial^2 p(x,t)}{\partial x^2}=0,\quad p(x,0 )=\Bigl( \frac{2}{\pi}\Bigr)^{\frac{1}{4}}e^{-x^2},\quad ||p||_{L_2}=1 .
\end{equation}
Passing in the problem (\ref{5.9}) to the Fourier transform:
$$
\tilde{p}(\lambda,t)=\int\,e^{i\lambda x}p(x,t)\,dx,
$$
we get
\begin{equation}\label{5.10}
    i \frac{\partial \tilde{p}(\lambda, t)}{\partial t}-\lambda^2 \tilde{p}(\lambda,t)=0,\quad \tilde{p}( \lambda, 0)=(2\pi)^{\frac{1}{4}}e^{-\frac{\lambda^2}{4}}.
\end{equation}
The solution of the problem (\ref{5.10}) has the form
\begin{equation}\label{5.11}
   \tilde{p}(\lambda,t)=e^{-i\lambda^2 t} \tilde{p}(\lambda,0)=(2\pi)^{\frac{1}{4} }e^{-\lambda^2(\frac{1}{4}+it)}.
\end{equation}
The inverse Fourier transform gives
\begin{equation}\label{5.12}
    p(x,t)=\Bigl( \frac{2}{\pi}\Bigr)^{\frac{1}{4}}\frac{1}{\sqrt{1+4it}}e^{ -\frac{x^2}{1+4it}}.
\end{equation}
Note also that the function
$$\mathcal{P}(x,t)=\int_{-\infty}^{x}\,|p(s,t)|^2\,ds=\frac{1}{2}(1 +\Phi\Bigl( \frac{x\sqrt{2}}{1+16t^2}\Bigr),
$$
where $\Phi(x)=\frac{2}{\sqrt{\pi}}\int_0^x\,e^{-s^2}\,ds $ is the probability integral (the Fresnel integral). Then the solution  of the equations (\ref{1.1}) has the explicit form
\begin{equation}\label{5.13}
   u(x,y;t)=\frac{kp(x,t)p(y,t)}{1-k^2\mathcal{P}(x,t)\mathcal{\hat{P}} (y,t)},
\end{equation}
where $\mathcal{\hat{P}}(y,t)=1- \mathcal{P}(y,t)$.

{\bf Acknowledgments.}\\
 The author thanks the University of Bonn for the 1.5 month invitation in 2019, which allowed him to return to research on the soliton theory. For a number of reasons, a while ago, he had to abandon work on this subject. He is  grateful to Professor Herbert Koch for stimulating discussions, correspondence, and citing important published work. He is also grateful to the Simons Foundation for the financial support of scientists from the Institute of Mathematics of the National Academy of Sciences of Ukraine, which made it possible to write this work.


\begin{thebibliography}{99}
\bibitem {1} L. P. Nizhnik, The inverse scattering problems for the hyperbolic equations and their application to non--linear integrable systems, Rep. Math. Phys., 26, №2,  (1988), 261-283.
\bibitem {2}  L. P. Nizhnik, Inverse scattering problems for  hyperbolic equations, Kyiv, Naukova dumka, 1991, 232 pages (in Russian).
\bibitem {3} A. Davey, K. Stewartson, On three dimensional packets of surface waves, Proc. R. Soc. A, 338 (1613), (1974), 101–110.
\bibitem {4} A.R. Osborne, Nonlinear Ocean Waves and the Inverse Scattering Transform, International Geophysics Series, V 97, Elsevier Inc., 2010, 944 pages.
\bibitem {5} M.J.  Ablowitz  and  H. Segur, Solitons  and  the  Inverse  Scattering Transform,   SIAM Studies in Applied Mathematics,  SIAM,
Philadelphia, PA, 1981, 425 pages.
\bibitem {6} A. I. Nachman, I. Regev, D. I. Tataru, A Nonlinear Plancherel Theorem with Applications to Global Well-Posedness for the Defocusing Davey-Stewartson Equation and to the Inverse Boundary Value Problem of Calderón, Invent. Math. 2020, 220:395-451.
\end{thebibliography}
\end{document}